\theoremstyle{plain}
\newtheorem{theorem}{Theorem}[section]
\newtheorem{proposition}[theorem]{Proposition}
\newtheorem{lemma}[theorem]{Lemma}
\numberwithin{equation}{section}
\theoremstyle{definition}
\newtheorem{definition}[theorem]{Definition}
\newtheorem{remark}[theorem]{Remark}
\newtheorem{example}[theorem]{Example}
\newcommand{\cC}{\mathcal{C}}
\newcommand{\cT}{\mathcal{T}}
\newcommand{\cF}{\mathcal{F}}
\newcommand{\cR}{\mathcal{R}}
\newcommand{\cX}{\mathcal{X}}
\newcommand{\cG}{\mathcal{G}}
\newcommand{\cH}{\mathcal{H}}
\newcommand{\fS}{\mathfrak{S}}
\newcommand{\fA}{\mathfrak{A}}
\newcommand{\fI}{\mathfrak{I}}
\newcommand{\Q}{\mathbb{Q}}
\newcommand{\R}{\mathbb{R}}
\newcommand{\Z}{\mathbb{Z}}
\newcommand{\N}{\mathbb{N}}
\newcommand{\ie}{\textit{i}.\textit{e}. }
\DeclareMathOperator{\row}{row}
\begin{document}
\title[Cohomology of type $B$ real permutohedral varieties]{Cohomology of type $B$ real permutohedral varieties}

\author[Younghan Yoon]{Younghan Yoon}
\address{Department of mathematics, Ajou University, 206, World cup-ro, Yeongtong-gu, Suwon 16499,  Republic of Korea}
\email{younghan300@ajou.ac.kr}

\date{\today}
\subjclass[2020]{57S12, 14P25, 14M25, 55U10, 20F55}

\keywords{cohomology rings, real algebraic varieties, type~$B$ permutohedral varieties, signed permutations, $B$-snakes, Coxeter groups, root systems, Weyl groups}

\thanks{This work was supported by the National Research Foundation of Korea Grant funded by
the Korean Government (NRF-2021R1A6A1A10044950).}

\begin{abstract}
Type~$A$ and type~$B$ permutohedral varieties are classic examples of mathematics, and their topological invariants are well known.
This naturally leads to the investigation of the topology of their real loci, known as type~$A$ and type~$B$ real permutohedral varieties.

The rational cohomology rings of type~$A$ real permutohedral varieties are fully described in terms of alternating permutations.
Until now, only rational Betti numbers of type~$B$ real permutohedral varieties have been described in terms of $B$-snakes.
In this paper, we explicitly describe the multiplicative structure of the cohomology rings of type~$B$ real permutohedral varieties in terms of $B$-snakes.
\end{abstract}
\maketitle
\tableofcontents

\section{Introduction}

A normal algebraic variety $X$ is called a \emph{toric variety} if $X$ contains an algebraic torus as an open dense subset such that the action of the torus on itself extends to $X$.
The fundamental theorem for toric geometry says that the category of $n$-dimensional toric varieties is equivalent to that of fans in $\R^n$.
Furthermore, it is well-known that each smooth compact toric variety corresponds to a non-singular complete fan~\cite{Cox_toric_book}.

A root system~$\Phi_R$ of type~$R$ induces a non-singular complete fan~$\Sigma_R$ via its Weyl chambers and co-weight lattice~\cite{Procesi1990}.
Consequently, $\Phi_R$ determines a smooth compact toric variety~$X_R$, known as the \emph{Coxeter toric variety} of type $R$.
In particular, $X_{A_n}$ and $X_{B_n}$ are called the $n$-dimensional \emph{type~$A$} and \emph{type~$B$ permutohedral varieties}, respectively.
They have been extensively studied in various fields of mathematics~\cite{Klyachko1985, Mari1988, JuneHuh2012, Huh2014PhD, Eur2024, Clader2024, Eur2025}.

For a toric variety $X$, the fixed point set $X^\R$ of $X$ under the canonical involution induced by a complex conjugation is called a \emph{real toric variety}.
For a root system of type~$R$, the real locus~$X^\R_R$ is called the \emph{real Coxeter toric variety} of type~$R$.
For types $R=A$ or $R = B$, $X^\R_R$ is called a \emph{type~$A$} or \emph{type~$B$ real permutohedral variety}, respectively.

In general, computing topological invariants of real algebraic varieties is quite challenging.
However, due to the effective symmetry of real Coxeter toric varieties, the rational Betti numbers of $X^{\R}_R$ have been fully computed for all root systems~\cite{Henderson2012, Choi-Park-Park2017typeB, Choi-Kaji-Park2019, Cho-Choi-Kaji2019, Choi-Yoon-Yu2024}.
It is noteworthy that, for the types $R = A$ and $B$, the $k$th (rational) Betti numbers $\beta_k$ of $X^{\R}_{R}$ are given by:
\begin{align*}
  \beta^k(X^{\R}_{A_n}) &= \binom{n+1}{2k} a_{2k} \\
  \beta_k(X^{\R}_{B_n}) &= {n \choose 2k}b_{2k} + {n \choose 2k-1}b_{2k-1},
\end{align*}
where $a_r$ denotes the $r$th \emph{Euler zigzag number} (A000111 in \cite{oeis}) and $b_r$ denotes the $r$th \emph{Springer number} (A001586 in \cite{oeis}).
The numbers $a_r$ and $b_r$ are also known as the numbers of alternating permutations and $B_r$-snakes, respectively.

Very little has been studied about the multiplicative structure of the rational cohomology $H^\ast(X^\R_{R}; \Q)$ of a real Coxeter toric variety $X^\R_R$.
However, it is known that the multiplicative structure of $H^\ast(X^\R_{A_n}; \Q)$ can be described in terms of alternating permutations \cite{Choi-Yoon2023}.
In this paper, we describe the multiplicative structure of $H^\ast(X^\R_{B_n}; \Q)$ in terms of $B$-snakes.

For each subset $I$ of $[n] = \{1,\ldots,n\}$, a \emph{signed permutation} on $I$ is a map $x \colon I \to \Z$ that satisfies the following conditions:
\begin{enumerate}
  \item $\left\vert x(i) \right\vert \in I$ for each $i \in I$, and
  \item $\left\vert x(i) \right\vert \neq \left\vert x(j) \right\vert$ for $i \neq j$.
\end{enumerate}
The set of signed permutations on $I$ is denoted by $\fS^B_I$, which is also known as the \emph{hyperoctahedral group} and the Weyl group of type~$B$~\cite{Gordon_book1981}.
A signed permutation $x = x_r \cdots x_2 x_1 \in \fS^B_I$ can be grouped as follows:
$$
\begin{cases}
	[x_{2k-1}/x_{2k-2}x_{2k-3}/\cdots/x_2x_1], & r = 2k-1, \\
	[x_{2k}x_{2k-1}/x_{2k-2}x_{2k-3}/\cdots/x_2x_1], & r = 2k.
\end{cases}
$$
A signed permutation $x = x_r\cdots x_2x_1 \in \fS^B_I$ is called a \emph{$B$-snake} on $I$, if
$$
0 < x_r > x_{r-1} < \cdots x_1,
$$
and the set of $B$-snakes on $I$ is denoted by $\fA^B_I$.

We consider the $\Q$-vector spaces $\Q \left\langle \fS_I^B \right\rangle$ and $\Q \left\langle \fA_I^B \right\rangle$.
Each element $x \in \fS^B_I$ and $y \in \fA^B_I$ corresponds to a vector in $\Q \left\langle \fS_I^B \right\rangle$ and $\Q \left\langle \fA_I^B \right\rangle$, respectively.
\begin{definition}\label{def M_I}
For each $x = x_r\cdots x_1 \in \fS^B_I$, we define some elements of $\Q \left\langle \fS_I^B \right\rangle$ as follows.
\begin{enumerate}
  \item For each $1 \leq i  \leq \lfloor \frac{r}{2}\rfloor$, $\cH_1^i(x) := [x_r \cdots /x_{2i}x_{2i-1}/\cdots x_1] + [x_r \cdots /x_{2i-1}x_{2i}/\cdots x_1]$.
  \item For $1 \leq i < \lfloor \frac{r}{2}\rfloor$, $\cH_2^i(x)$ is defined by
		\begin{align*}
		[x_{r} \cdots /x_{2i+2} x_{2i+1} /x_{2i} x_{2i-1}/ \cdots x_1] & - [x_{r} \cdots /x_{2i+2} x_{2i}/ x_{2i+1} x_{2i-1}/ \cdots x_1] \\
			+ [x_{r} \cdots /x_{2i+1} x_{2i}/ x_{2i+2} x_{2i-1}/ \cdots x_1] & + [x_{r} \cdots /x_{2i+2} x_{2i-1}/ x_{2i+1} x_{2i}/ \cdots x_1] \\
			- [x_{r} \cdots /x_{2i+1} x_{2i-1} /x_{2i+2} x_{2i}/ \cdots x_1] & + [x_{r} \cdots /x_{2i} x_{2i-1}/ x_{2i+2} x_{2i+1}/ \cdots x_1].
		\end{align*}
    \item If $\left\vert I \right\vert$ is odd, $\cH_3(x) := [x_r/x_{r-1}\cdots x_1] + [\bar{x}_{r}/ x_{r-1}\cdots x_1]$.
        Otherwise, $\cH_3(x):=0$.
    \item If $\left\vert I \right\vert$ is even, $$
        \cH_4(x) := [x_rx_{r-1}/ \cdots x_1] - [x_r\bar{x}_{r-1}/\cdots x_1] + [x_{r-1}\bar{x}_{r} /\cdots x_1] -[\bar{x}_{r-1}\bar{x}_r /\cdots x_1].
        $$
        Otherwise, $\cH_4(x):=0$.
    \item If $\left\vert I \right\vert$ is odd and at least $3$, $\cH_5(x)$ is defined by
		\begin{align*}
			[x_r/x_{r-1}x_{r-2}/\cdots ] &- [x_r/\bar{x}_{r-1}x_{r-2}/\cdots ] + [x_r/\bar{x}_{r-2}x_{r-1}/\cdots ] - [x_r/\bar{x}_{r-2}\bar{x}_{r-1}/\cdots]\\
			-[x_{r-1}/x_{r}x_{r-2}/\cdots ] &+ [x_{r-1}/\bar{x}_rx_{r-2}/\cdots ] - [x_{r-1}/\bar{x}_{r-2}x_r/\cdots ] + [x_{r-1}/\bar{x}_{r-2}\bar{x}_r/\cdots ]\\
			+[x_{r-2}/x_rx_{r-1}/\cdots ] &- [x_{r-2}/\bar{x}_{r}x_{r-1}/\cdots ] + [x_{r-2}/\bar{x}_{r-1}x_{r}/\cdots ] - [x_{r-2}/\bar{x}_{r-1}\bar{x}_{r}/\cdots ].
		\end{align*}
    Otherwise, $\cH_5(x):=0$.
\end{enumerate}
For $1 \leq i \leq 5$, let $\cH_i$ be a set defined by
$$
\begin{cases}
  \{\cH_1^j(x) \colon x \in \fS^B_I, 1 \leq j  \leq \lfloor \frac{r}{2}\rfloor\}, & \mbox{if } i =1 \\
  \{\cH_2^j(x) \colon x \in \fS^B_I, 1 \leq j  < \lfloor \frac{r}{2}\rfloor \}, & \mbox{if } i =2 \\
  \{\cH_i(x) \colon x \in \fS^B_I\}, & \mbox{otherwise}.
\end{cases}
$$
Let $M_I$ be a $\Q$-subspace of $\Q \left\langle \fS_I^B \right\rangle$ spanned by $\cH_i$ for all $1 \leq i \leq 5$.
\end{definition}

In Section~\ref{Sec:signed perm}, we define the explicit $\Q$-linear map
$$
\sum_{I \subset [n]} \Q\left\langle \fS^B_I \right\rangle \to H^\ast(X^\R_{B_n}; \Q),
$$
and demonstrate that the kernel of this map includes $M_I$ for all $I \subset [n]$.
In conclusion, this induces a well-defined $\Q$-linear map
$$
\Psi \colon \sum_{I \subset [n]} \Q\left\langle \fS^B_I \right\rangle/{M_I} \to H^\ast(X^\R_{B_n}; \Q).
$$

In Section~\ref{Sec:B-snakes}, we prove Theorem~\ref{theorem_in4} using the newly defined partial order $\trianglelefteq$ on $\fS^B_I$.
Furthermore, this establishes Theorem~\ref{main1}.

\begin{theorem}\label{main1}
For a subset $I \subset [n]$ with cardinality $r$,
\begin{enumerate}
  \item the quotient space $\Q\left\langle \fS^B_I \right\rangle / M_I$ is isomorphic to $\Q\left\langle \fA^B_I \right\rangle$, and
  \item the $\Q$-linear map $\Psi$ is a $\Q$-vector space isomorphism.
\end{enumerate}
\end{theorem}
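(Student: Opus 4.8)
The plan is to derive both parts from a standard-monomial (straightening) argument governed by the partial order $\trianglelefteq$ on $\fS^B_I$ introduced in Section~\ref{Sec:B-snakes}, whose content is Theorem~\ref{theorem_in4}: the five families $\cH_1,\dots,\cH_5$ generating $M_I$ form a \emph{unitriangular} rewriting system. Concretely, after expanding in the basis $\fS^B_I$ of $\Q\langle\fS^B_I\rangle$, each generator has a unique $\trianglelefteq$-maximal term, so it may be written $g_w=w+(\text{strictly }\trianglelefteq\text{-smaller terms})$ with $w$ a non-snake, and every non-snake arises as exactly one such leading term $w$. To prove Part~(1) I would first inspect the six displayed expressions and locate each $\trianglelefteq$-maximal monomial: the defining chain $0<x_r>x_{r-1}<x_{r-2}>\cdots$ of a $B$-snake forbids a local pattern at each grouping slot, and each $\cH_j$ is tailored to rewrite one such forbidden pattern, with $\cH_1^i$ correcting an order-violation inside the pair $x_{2i}x_{2i-1}$, $\cH_2^i$ one straddling two consecutive pairs, and $\cH_3,\cH_4,\cH_5$ treating the parity- and sign-sensitive patterns at the leading block according to whether $|I|$ is odd or even. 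Granting the bijection between non-snakes and leading terms, the $g_w$ relate to the basis $\fS^B_I$ by a $\trianglelefteq$-unitriangular matrix, hence are linearly independent and, together with the $b_r=|\fA^B_I|$ snakes, form a basis of $\Q\langle\fS^B_I\rangle$; since $M_I=\Span\{g_w\}$, the quotient retains exactly the classes of the snakes as a basis, giving $\Q\langle\fS^B_I\rangle/M_I\xrightarrow{\sim}\Q\langle\fA^B_I\rangle$.

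For Part~(2), Part~(1) and additivity over the direct sum give the domain of $\Psi$ dimension $\sum_{I\subset[n]}|\fA^B_I|=\sum_{r=0}^{n}\binom{n}{r}b_r$, while summing the stated Betti numbers and splitting the index by parity yields
\[
\sum_{k\ge 0}\beta_k(X^\R_{B_n})=\sum_{k\ge 0}\binom{n}{2k}b_{2k}+\sum_{k\ge 0}\binom{n}{2k-1}b_{2k-1}=\sum_{r=0}^{n}\binom{n}{r}b_r,
\]
so the codomain $H^\ast(X^\R_{B_n};\Q)$ has the same total dimension. Since a snake on $I$ should land in cohomological degree $\lceil|I|/2\rceil$, degree $k$ receives precisely the $|I|=2k-1$ and $|I|=2k$ contributions, matching $\beta_k$ slot by slot, so $\Psi$ is a graded map between graded spaces agreeing in every degree. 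It then suffices to check surjectivity: the pre-quotient map $\sum_I\Q\langle\fS^B_I\rangle\to H^\ast(X^\R_{B_n};\Q)$ of Section~\ref{Sec:signed perm} is built from a family of cohomology classes that spans the cohomology, hence is onto, whence $\Psi$ is onto; a surjection between finite-dimensional spaces of equal dimension is an isomorphism.

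The crux is Theorem~\ref{theorem_in4}, i.e.\ Part~(1), and it is delicate for two reasons. First, one must calibrate $\trianglelefteq$ so that \emph{each} generator truly has a single $\trianglelefteq$-maximal monomial, which is not obvious for the six-term $\cH_2^i$ or the sign-laden $\cH_5$. Second, one must prove a genuine \emph{bijection} between non-snakes and leading terms, so that no non-snake is omitted and none is the leading term of two generators; this is exactly what forces $\dim M_I=|\fS^B_I|-b_r$ and identifies the quotient with $\Q\langle\fA^B_I\rangle$. The parity-sensitive families $\cH_3,\cH_4,\cH_5$ at the leading block, where the positivity constraint $0<x_r$ interacts with the parity of $|I|$, are where this bookkeeping is hardest.
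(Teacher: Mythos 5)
Your overall architecture (a partial order, leading-term triangularity, and a dimension count against $\sum_r\binom{n}{r}b_r$) points in the right direction, but the two steps you lean on hardest are precisely the ones you do not prove, and one of them is not how the paper proceeds at all. First, the claimed unitriangular rewriting system --- ``each generator $g_w$ has a unique $\trianglelefteq$-maximal term $w$, $w$ is a non-snake, and every non-snake arises as exactly one such leading term'' --- is neither established nor literally true for the generating set of Definition~\ref{def M_I}: the sets $\cH_1,\dots,\cH_5$ are indexed by \emph{all} $x\in\fS^B_I$ (and all positions $i$), so they are highly redundant, many generators share the same $\trianglelefteq$-maximal monomial, and a generator such as $\cH_1^i(x)$ can have a snake among its terms. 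If this bijection held one would get $\dim M_I=|\fS^B_I|-b_r$ and the independence of the snakes in the quotient purely combinatorially; you correctly identify this as the crux and as delicate, but then leave it unproved. The paper deliberately proves only the easy half combinatorially: in Theorem~\ref{theorem_in4} it shows the snakes \emph{span} $\Q\langle\fS^B_I\rangle/M_I$ by rewriting any non-snake as a combination of strictly $\triangleleft$-smaller permutations and invoking termination of the descent. It never needs to control $\dim M_I$ from below by combinatorics.

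The independence of the snakes (equivalently $\dim\Q\langle\fS^B_I\rangle/M_I\ge b_r$) and the surjectivity you assert in Part~(2) are both obtained topologically, and your proposal has a genuine gap exactly there: you write that the pre-quotient map ``is built from a family of cohomology classes that spans the cohomology, hence is onto,'' which assumes what must be shown. The paper's mechanism is Lemma~\ref{odd_order_lemma}: if the distinguished simplex $\{\cF_1(x),\dots,\cF_{\lfloor\frac{r+1}{2}\rfloor}(x)\}$ of a snake $x$ is a face of $\square_y$ for a snake $y$, then $x\trianglelefteq y$. This gives a triangularity statement for the \emph{cycles} $\square_x$ themselves (not for the relations), from which Lemma~\ref{basis_B} deduces that $\Psi_I(\fA^B_I)$ is linearly independent in $\widetilde H_{\lfloor\frac{r-1}{2}\rfloor}((K_{B_n})_I)$; since that group has dimension $b_r$ by~\eqref{typeB_I}, these classes form a basis, which simultaneously yields surjectivity of the composition~\eqref{map} and, combined with the spanning statement, the isomorphisms in both parts of the theorem. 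Without some substitute for Lemma~\ref{odd_order_lemma} (or a genuine proof of your bijection claim), your argument establishes at most that the snakes span the quotient and that the total dimensions match numerically, not that $\Psi$ is injective or surjective.
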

Every element $x \in \fS^B_I$ can be written as a linear combination of $B$-snakes on $I$ within $\Q\left\langle \fS^B_I  \right\rangle/ M_I$, and $\cC^{\alpha}_x$ denotes the coefficient of $\alpha \in \fA^B_I$ in the expression of~$x$, as
$$
	x = \sum_{\alpha \in \fA^B_I}\cC^{\alpha}_x\cdot\alpha.
$$

For subsets $I_1$ and $I_2$ of $[n]$, we briefly introduce some notions:
\begin{itemize}
	\item When $I_1 \cap I_2 = \emptyset$ and $\left\vert I_1 \right\vert \cdot \left\vert I_2 \right\vert$ is even, a $B$-snake $z_{\ell}\cdots z_1 \in \fA^B_{I_1 \cup I_2}$ is said to be \emph{restrictable} to $(I_1,I_2)$, if $\{\left\vert z_{2i-1} \right\vert , \left\vert z_{2i} \right\vert \}$ is contained in either $I_1$ or $I_2$ for each $1 \leq i \leq \lfloor \frac{\ell-1}{2} \rfloor$.
	\item $\cR^B_{(I_1,I_2)}$ is the set of restrictable $B$-snakes to $(I_1,I_2)$.
	\item For each $z = z_\ell \cdots z_1 \in \cR^B_{(I_1,I_2)}$, $\kappa_{(I_1,I_2)}(z)$ is the number of pairs $(z_{2i-1},z_{2j-1})$ such that $\left\vert z_{2i-1} \right\vert \in I_1$ and $\left\vert z_{2j-1} \right\vert \in I_2$ for all $\lfloor \frac{\ell+1}{2} \rfloor \geq i > j \geq 1$.
	\item For each $x \in \fS^B_I$, $\bar{x}$ is a signed permutation on $I$ such that $\bar{x}(i) = -x(i)$ for each $i \in I$.
	\item For $J \subset I \subset [n]$, $\rho_J(x) \in \fS^B_J$ is a \emph{subpermutation} of $x$, and $P_J(x)$ is as follows:
	$$
    P_J(x) =
	\begin{cases}
		\rho_{J}(x), & \mbox{if $\left\vert I \right\vert + \left\vert J \right\vert$ is even}, \\
		\rho_{J}(\bar{x}), & \mbox{if $\left\vert I \right\vert + \left\vert J \right\vert$ is odd},
	\end{cases}
	$$
\end{itemize}

\begin{definition}\label{def-multi}
	Let $\smile \colon \Q\left\langle \fA^B_{I_1}  \right\rangle \otimes \Q\left\langle \fA^B_{I_2}  \right\rangle \to \Q\left\langle \fA^B_{I_1 \triangle I_2}  \right\rangle$ be defined by, for each~$\alpha \in \fA^B_{I_1}$ and~$\beta \in \fA^B_{I_2}$,
	$$
	\alpha \smile \beta = \begin{cases}
		\underset{z \in \cR_{(I_1,I_2)}}{\sum}(-1)^{\kappa_{(I_1,I_2)}(z)} \cC^\alpha_{P_{I_1}(z)} \cC^\beta_{P_{I_2}(z)} \cdot z, & \mbox{if } I_1 \cap I_2 = \emptyset, \left\vert I_1 \right\vert \cdot \left\vert I_2 \right\vert \text{ is even},\\
		0, & \mbox{otherwise,}
	\end{cases}
	$$
	where $I_1 \triangle I_2 = (I_1 \cup I_2) \setminus (I_1 \cap I_2)$.
\end{definition}

In Section~\ref{Sec:multi}, we prove Theorem~\ref{thm:final}, which confirms Theorem~\ref{main2}.
\begin{theorem}\label{main2}
 	 From the multiplicative structure $\smile$,
 	 $$
 	( \bigoplus_{I \subset [n]} \Q\left\langle \fA^B_{I}  \right\rangle,+,\smile)
 	 $$
 	 is isomorphic to $\underset{k \in \N}\bigoplus H^k(X_{B_n}^{\R},\Q)$ as a $\Q$-algebra.
\end{theorem}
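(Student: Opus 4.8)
The plan is to build on Theorem~\ref{main1}, which already delivers the additive content: composing $\Psi$ with the isomorphism $\Q\langle\fA^B_I\rangle\cong\Q\langle\fS^B_I\rangle/M_I$ of Theorem~\ref{main1}(1) produces a $\Q$-vector space isomorphism $\bigoplus_{I\subset[n]}\Q\langle\fA^B_I\rangle \to \bigoplus_k H^k(X^\R_{B_n};\Q)$, which I also denote by $\Psi$. Hence the whole of Theorem~\ref{main2} reduces to showing that $\Psi$ is multiplicative, that is, $\Psi(\alpha\smile\beta)=\Psi(\alpha)\cup\Psi(\beta)$. Since $\smile$ and $\cup$ are both $\Q$-bilinear, it suffices to verify this identity when $\alpha\in\fA^B_{I_1}$ and $\beta\in\fA^B_{I_2}$ range over $B$-snakes, and I would organize the verification along the three cases of Definition~\ref{def-multi}.

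First I would pin down the cup product on the target in cochain terms. Each class $\Psi(x)$, for $x\in\fS^B_I$, is represented by an explicit cocycle in the model defining $\Psi$ in Section~\ref{Sec:signed perm}, and the grading by subsets $I\subset[n]$ reflects the decomposition of the rational cohomology of a real toric variety indexed by the $\F_2$-characters of its fan. Under this decomposition, the product of a class supported on the piece indexed by $I_1$ with one supported on $I_2$ is supported on the $\F_2$-sum $I_1\triangle I_2$, which is exactly the codomain $\Q\langle\fA^B_{I_1\triangle I_2}\rangle$ of $\smile$. This framework immediately accounts for the two vanishing branches: when $I_1\cap I_2\neq\emptyset$ the supports overlap and the relevant product of subcomplex cohomology vanishes, and when $|I_1|\cdot|I_2|$ is odd a parity obstruction in the cohomological degree forces the product to be zero; both agree with the ``$0$ otherwise'' branch of Definition~\ref{def-multi}.

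The heart of the argument is the remaining case $I_1\cap I_2=\emptyset$ with $|I_1|\cdot|I_2|$ even. Here I would compute the cochain-level cup product of representatives of $\Psi(\alpha)$ and $\Psi(\beta)$, and then rewrite the result in the $B$-snake basis of $\Q\langle\fA^B_{I_1\cup I_2}\rangle$ furnished by Theorem~\ref{main1}(1). The cup product merges the two groupings on $I_1$ and $I_2$ into a single grouping on $I_1\cup I_2$; because it must respect the pairing structure of the groupings, the surviving terms are precisely the restrictable $B$-snakes $z\in\cR^B_{(I_1,I_2)}$, for which each pair $\{|z_{2i-1}|,|z_{2i}|\}$ lies wholly in $I_1$ or in $I_2$. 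The subpermutation operators $P_{I_1}$ and $P_{I_2}$, with their parity-dependent bar correction $\bar{\,\cdot\,}$, read off the $I_1$- and $I_2$-components of $z$ and yield the coefficient $\cC^\alpha_{P_{I_1}(z)}\cC^\beta_{P_{I_2}(z)}$, while the interleaving of the odd-position entries drawn from $I_1$ against those drawn from $I_2$ contributes a Koszul-type shuffle sign that I would identify with $(-1)^{\kappa_{(I_1,I_2)}(z)}$. The main obstacle is exactly this last matching: reconciling the sign produced by the cochain cup product with the inversion count $\kappa_{(I_1,I_2)}(z)$, and confirming that $P_{I_1}$ and $P_{I_2}$ reproduce the correct coefficients after passing to the quotient by $M_{I_1\cup I_2}$. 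I expect to control both by invoking the partial order $\trianglelefteq$ on $\fS^B_{I_1\cup I_2}$ from Section~\ref{Sec:B-snakes} and the explicit straightening of signed permutations into $B$-snakes modulo $M_{I_1\cup I_2}$, which reduces the claim to a finite, order-compatible bookkeeping of signs. Once $\Psi(\alpha\smile\beta)=\Psi(\alpha)\cup\Psi(\beta)$ is verified in all cases, $\Psi$ is a $\Q$-algebra isomorphism and Theorem~\ref{main2} follows.
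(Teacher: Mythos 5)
Your proposal is correct and follows essentially the same route as the paper: reduce to multiplicativity of $\boldsymbol{\Psi}$ on $B$-snake basis elements, kill the cases $I_1\cap I_2\neq\emptyset$ and $\left\vert I_1\right\vert\cdot\left\vert I_2\right\vert$ odd by a degree count on $(K_{B_n})_{I_1\triangle I_2}$, and in the remaining case compute the product through the simplicial join $(K_{B_n})_{I_1\triangle I_2}\to (K_{B_n})_{I_1}\star(K_{B_n})_{I_2}$, where the restrictable snakes survive with the shuffle sign $(-1)^{\kappa_{(I_1,I_2)}(z)}$ and coefficients $\cC^\alpha_{P_{I_1}(z)}\cC^\beta_{P_{I_2}(z)}$ (this is exactly the content of the paper's Lemma~\ref{lemma2}). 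The only cosmetic difference is that the paper evaluates cochains against the homology cycles $\Psi_{I_1\cup I_2}(z)$ rather than straightening the product in the snake basis, and it does not need the partial order $\trianglelefteq$ for this step.
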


\section{Preliminaries}

Let $\Phi_R$ be a root system of type~$R$ in a finite-dimensional Euclidean space~$E$, and $W_R$ its Weyl group.
The group $W_R$ partitions $E$ into connected components, known as the \emph{Weyl chambers} of type~$R$.
Consider the fan~$\Sigma_R$ whose maximal cones are Weyl chambers of type~$R$.
It is worth noting that $\Sigma_R$ is a non-singular complete fan~\cite{Procesi1990}.
Let $V_R = \{v_1,v_2,\ldots,v_n\}$ denote the set of rays generating the fan~$\Sigma_R$, and $K_R$ the simplicial complex, known as the \emph{Coxeter complex} of type~$R$, whose faces correspond to the corresponding faces in $\Sigma_R$.
We define a linear map $\lambda_R \colon V_R \to \Z^n$, called the \emph{characteristic map} of type~$R$, such that $\lambda_R(v)$ is the primitive vector in the direction of each $v \in V_R$.
The fan $\Sigma_R$ can be replaced by the pair $(K_R,\lambda_R)$.
By the fundamental theorem of toric geometry, the pair $(K_R,\lambda_R)$ completely determines a smooth compact toric variety~$X_R$, referred to as the \emph{Coxeter toric variety} of type~$R$.

A \emph{real Coxeter toric variety} $X^\R_R$ of type~$R$, that is, the real locus of $X_R$, is also completely determined by a pair $(K_R,\lambda^\R_R)$, where the (mod $2$) characteristic map is defined as
$$
\lambda^{\R}_R \colon V_R \xrightarrow{\lambda_R} \Z^n \xrightarrow{\text{mod} 2} (\Z/2\Z)^n.
$$
The map $\lambda_R^\R$ can also be regarded as an $n \times m$ (mod $2$) matrix $\Lambda_R$, called the (mod $2$) \emph{characteristic matrix} of $X^\R_R$.

The row space $\row(\Lambda_R)$ of $\Lambda_R$ forms a subspace of $(\Z/2\Z)^m$.
Each element $S \in \row(\Lambda_R)$ naturally corresponds to a subset of $V_R$.
The cohomology of $X^{\R}_R$ is known to depend entirely on the pair $(K_R, \Lambda_R)$, as established in Theorem~\ref{ChoiPark}.
For further details, see \cite{Choi-Park2017_multiplicative}.
\begin{theorem}\label{ChoiPark}
  There is a $\Q$-algebra isomorphism
$$
  H^{\ast}(X^{\R}(K_R),\Q) \cong \underset{S \in \row(\Lambda_R)}\bigoplus \widetilde{H}^{\ast-1}((K_R)_S,\Q).
$$
The multiplicative structure on $\underset{S \in \row(\Lambda_R)}\bigoplus \widetilde{H}^{\ast}((K_R)_S;\Q)$ is defined via the canonical maps
$$
  \widetilde{H}^{k-1}((K_R)_{S_1}) \otimes \widetilde{H}^{\ell-1}((K_R)_{S_2}) \to \widetilde{H}^{k+\ell-1}((K_R)_{S_1+S_2})
$$
induced by simplicial maps $(K_R)_{S_1+S_2} \to (K_R)_{S_1} \star (K_R)_{S_2}$, where $\star$ denotes the simplicial join.
\end{theorem}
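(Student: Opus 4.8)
The plan is to realize $X^\R(K_R)$ as a quotient of a real moment-angle complex and then transport a known decomposition of the latter's rational cohomology through the quotient map. Write $m = \left\vert V_R \right\vert$ and regard $\Lambda_R$ as a linear map $(\Z/2)^m \to (\Z/2)^n$. Let $\R\cZ_{K_R} \subseteq (D^1)^m$ be the real moment-angle complex of $K_R$, where the $i$th factor is $D^1$ on faces containing the $i$th vertex and $S^0 = \{\pm 1\}$ otherwise, equipped with the coordinatewise $(\Z/2)^m$-action by sign changes. The subgroup $H := \Ker(\Lambda_R) \subseteq (\Z/2)^m$ acts freely on $\R\cZ_{K_R}$ (freeness is exactly the non-singularity encoded in the characteristic map), and the standard construction of a real toric space identifies $X^\R(K_R)$ with the orbit space $\R\cZ_{K_R}/H$. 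Since $H$ is a finite $2$-group and we work over $\Q$, the covering $\R\cZ_{K_R} \to X^\R(K_R)$ gives, via the transfer homomorphism, a ring isomorphism $H^\ast(X^\R(K_R); \Q) \cong H^\ast(\R\cZ_{K_R}; \Q)^{H}$ onto the subring of $H$-invariants.

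Next I would invoke the rational analogue of the Buchstaber--Panov formula for the real moment-angle complex (the real version, due to Cai): there is an isomorphism
$$
\widetilde{H}^{k}(\R\cZ_{K_R}; \Q) \cong \bigoplus_{\omega \subseteq V_R} \widetilde{H}^{k-1}((K_R)_\omega; \Q),
$$
where $(K_R)_\omega$ is the full subcomplex on $\omega$. The crucial point is equivariance. Identifying a subset $\omega$ with its characteristic vector $S \in (\Z/2)^m$, an element $t \in (\Z/2)^m$ acts on the $\omega$-summand through the character $(-1)^{\langle S, t \rangle}$, so this summand is precisely the $S$-isotypic component of the $(\Z/2)^m$-representation $\widetilde{H}^\ast(\R\cZ_{K_R}; \Q)$. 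Taking $H$-invariants therefore retains exactly those $S$ with $\langle S, t \rangle = 0$ for all $t \in \Ker(\Lambda_R)$, that is, those $S \in \Ker(\Lambda_R)^\perp = \row(\Lambda_R)$. Combining this identification with the transfer isomorphism yields the additive statement
$$
H^\ast(X^\R(K_R); \Q) \cong \bigoplus_{S \in \row(\Lambda_R)} \widetilde{H}^{\ast - 1}((K_R)_S; \Q).
$$

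For the ring structure I would track the cup product through each of the three isomorphisms. The $(\Z/2)^m$-action respects cup products, and since characters multiply as $\chi_{S_1}\chi_{S_2} = \chi_{S_1+S_2}$, the product of the $S_1$- and $S_2$-isotypic summands lands in the $(S_1+S_2)$-summand; on invariants this is the required grading by $\row(\Lambda_R)$, with $S_1+S_2$ computed modulo $2$ (so the support is $S_1 \triangle S_2$). The main obstacle is to verify that the internal product on $\R\cZ_{K_R}$ is the one induced by the simplicial maps $(K_R)_{S_1+S_2} \to (K_R)_{S_1} \star (K_R)_{S_2}$: one must produce, at the cochain level, the combinatorial map sending a simplex of the full subcomplex on $S_1 \triangle S_2$ into the join of the full subcomplexes on $S_1$ and $S_2$, and check that Cai's isomorphism intertwines the cup product of $\R\cZ_{K_R}$ with this join-induced pairing. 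This is where the degree conventions and signs require care; however, over $\Q$ there is no $2$-torsion to obstruct the comparison, and the naturality of the transfer ensures that the pairing descends to the quotient. Assembling these compatibilities gives the claimed $\Q$-algebra isomorphism.
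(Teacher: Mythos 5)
The paper does not prove this statement: Theorem~\ref{ChoiPark} is quoted verbatim from \cite{Choi-Park2017_multiplicative}, and the text simply refers the reader there. Your sketch reconstructs essentially the argument of that reference --- presenting $X^\R(K_R)$ as the quotient of the real moment-angle complex $\R\cZ_{K_R}$ by the free action of $\Ker(\Lambda_R)$, passing to invariants via transfer, identifying the Hochster--Cai summands with the isotypic components of the $(\Z/2)^m$-action, and reading off the product from the join maps --- so the approach is the correct and standard one. Be aware, though, that the step you defer (``check that Cai's isomorphism intertwines the cup product of $\R\cZ_{K_R}$ with this join-induced pairing'') is precisely the main technical content of \cite{Choi-Park2017_multiplicative}, so as written your proposal is an accurate outline rather than a complete proof.
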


We now recall the constructions of $K_{B_n}$ and $\Lambda_{B_n}$.
As described in~\cite{Choi-Park-Park2017typeB}, each $k$-simplex of $K_{B_n}$ can be identified with a nested chain of $k+1$ nonempty proper subsets $L_1 \subsetneq \cdots \subsetneq L_{k+1}$ of $[n] = \{1,\ldots,n\}$, satisfying the condition:
$$
\text{if } i \in L_s, \text{ then } -i \notin L_s
$$
for all $1 \leq s \leq k+1$.
The (mod $2$) characteristic map $\lambda^{\R}_{B_n}$ is defined by 
$$
\lambda^{\R}_{B_n}(L) = \sum_{k \in L}\mathbf{e}_k + \sum_{-k \in L}\mathbf{e}_{-k}
$$
for vertex $L$ in $K_{B_n}$, where $\mathbf{e}_r$ is the $r$th standard vector of $(\Z/2\Z)^n$ for $1 \leq r \leq n$.

Each element $S$ in $\row(\Lambda_{B_n})$ corresponds to a subset $I_S$ of $[n]$.
For $S \in \row(\Lambda_{B_n})$, let  $(K_{B_n})_{I_S}$ denote the full-subcomplex of $K_{B_n}$ consisting of vertices $J$ such that the cardinality of
$$
\{i \in I_S \colon i \in J\text{ or } -i \in J\}
$$
is odd.
Additionally, for $S_1$ and $S_2 \in \row(\Lambda_{B_n})$, we have
$$
I_{S_1+S_2} = (I_{S_1} \cup I_{S_2})\setminus(I_{S_1} \cap I_{S_2}).
$$
Therefore, each subset $I \subset [n]$ can be regarded as an element of~$\Lambda_{B_n}$.

Let us define the subcomplex $\widehat{(K_{B_{n}})_{I}}$ of $(K_{B_{n}})_{I}$ obtained from $(K_{B_{n}})_{I}$ by iteratively removing all vertices $L \not\subset I \cup \{-i \colon i \in I\}$.

\begin{proposition}\cite{Choi-Park-Park2017typeB}\label{typeB_homotopy}
	Let $\pi \colon (K_{B_{n}})_{I} \to \widehat{(K_{B_{n}})_{I}}$ be the simplicial map defined by $\pi(\sigma) = \sigma \cap I$ for each $\sigma \in (K_{B_{n}})_{I}$.
	Then $\pi$ is a homotopy equivalence.
\end{proposition}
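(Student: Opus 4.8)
The plan is to present both $(K_{B_n})_I$ and $\widehat{(K_{B_n})_I}$ as order complexes of posets and then invoke the standard fact that two comparable order-preserving maps of posets induce homotopic maps on order complexes. Throughout write $\pm I := I \cup \{-i : i \in I\}$. Since a simplex of $K_{B_n}$ is exactly a nested chain of vertices ordered by inclusion, the full subcomplex $(K_{B_n})_I$ is the order complex $\Delta(P)$ of the poset $P$ of those vertices $L$ of $K_{B_n}$ for which $\lvert\{i \in I : i \in L \text{ or } -i \in L\}\rvert$ is odd. The iterated deletion producing $\widehat{(K_{B_n})_I}$ removes precisely the vertices $L \not\subseteq \pm I$, so what remains is the full subcomplex on $P' := \{L \in P : L \subseteq \pm I\}$; that is, $\widehat{(K_{B_n})_I} = \Delta(P')$. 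In this language $\pi$ is the vertex map $L \mapsto L \cap \pm I$ (the intersection in the statement is taken against $\pm I$, the set whose complement is deleted).

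Next I would verify that $\pi \colon P \to P'$ is a well-defined order-preserving retraction. Order-preservation is immediate, since intersecting with the fixed set $\pm I$ respects inclusion, and $\pi(L) = L$ whenever $L \subseteq \pm I$, so $\pi$ fixes $P'$ pointwise. The one point requiring care is that $\pi$ really lands in $P'$: for every $i \in I$ one has $\pm i \in \pm I$, whence $i \in L \cap \pm I \iff i \in L$ and $-i \in L \cap \pm I \iff -i \in L$; consequently the set $\{i \in I : i \in L \text{ or } -i \in L\}$ is left unchanged by $\pi$. Thus $\pi(L)$ again satisfies the odd-cardinality condition, is in particular nonempty, and is contained in $\pm I$, so $\pi(L) \in P'$. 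The simplicial map $\Delta(\pi)$ induced by this poset map is then exactly the $\pi$ of the statement.

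Finally, composing with the inclusion $\iota \colon P' \hookrightarrow P$ gives $\iota \circ \pi(L) = L \cap \pm I \subseteq L = \id_P(L)$, so $\iota \circ \pi \leq \id_P$ as order-preserving self-maps of $P$. The poset lemma that comparable order-preserving maps induce homotopic maps on order complexes yields $\Delta(\iota) \circ \Delta(\pi) \simeq \id_{\Delta(P)}$, while $\pi \circ \iota = \id_{P'}$ gives $\Delta(\pi) \circ \Delta(\iota) = \id_{\Delta(P')}$. Hence $\Delta(\pi)$ and the inclusion $\Delta(\iota)$ are mutually inverse homotopy equivalences, which proves that $\pi$ is a homotopy equivalence. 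The only genuine obstacle is the bookkeeping of the first two paragraphs: correctly translating the combinatorial definitions of $(K_{B_n})_I$, $\widehat{(K_{B_n})_I}$, and $\pi$ into the poset setting and checking that $\pi$ preserves the parity condition so that it maps into $\widehat{(K_{B_n})_I}$; once this identification is secured, the homotopy equivalence is a formal consequence.
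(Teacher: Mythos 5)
Your proof is correct. Note that the paper itself offers no argument here: Proposition~\ref{typeB_homotopy} is imported verbatim from \cite{Choi-Park-Park2017typeB}, so there is no in-paper proof to compare against. Your route --- realizing $K_{B_n}$ as the order complex of the poset of admissible sets under inclusion, identifying $(K_{B_n})_I$ and $\widehat{(K_{B_n})_I}$ with the order complexes of the induced subposets $P$ and $P'$, and observing that $L \mapsto L \cap (I \cup \{-i : i \in I\})$ is an order-preserving retraction satisfying $\iota \circ \pi \leq \id_P$ --- is a clean and complete argument once combined with the standard order homotopy lemma. You correctly handle the two points where the proof could silently fail: that the statement's ``$\sigma \cap I$'' must be read as intersection with $I \cup \{-i : i \in I\}$ (otherwise the map would not even land in the complex, since vertices of $\widehat{(K_{B_n})_I}$ such as $\{-1\}$ contain negative entries), and that $\pi(L)$ still satisfies the odd-cardinality condition and is in particular nonempty, so it is a genuine vertex of $\widehat{(K_{B_n})_I}$. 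This is a valid independent proof of the cited result.
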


Consequently, when computing (co)homology of $(K_{B_n})_I$, it suffices to consider only the full-subcomplex $\widehat{(K_{B_{n}})_{I}}$, as any contributions from vertices not contained within this subcomplex are irrelevant.

\begin{example}
Consider $B_3$ and $I = \{1\}$.
Then, $(K_{B_3})_{I}$ consists of two copies of the $2$-dimensional component as follows:
\begin{center}
\begin{tikzpicture}[scale = 1]
            \coordinate (1) at (0,0);
            \coordinate (1') at (0.19,0.2);
            \coordinate (12) at (1.5,0); \fill (12) circle [radius=2pt]; \node [below right] at (12) {\tiny $\{1,2\}$};
            \coordinate (13) at (0,1.5); \fill (13) circle [radius=2pt]; \node [above right] at (13) {\tiny $\{1,3\}$};
            \coordinate (1b2) at (-1.5,0); \fill (1b2) circle [radius=2pt]; \node [below left] at (1b2) {\tiny $\{1,-2\}$};
            \coordinate (1b3) at (0,-1.5); \fill (1b3) circle [radius=2pt]; \node [below left] at (1b3) {\tiny $\{1,-3\}$};
            \coordinate (123) at (1,1); \fill (123) circle [radius=2pt]; \node [right] at (123) {\tiny $\{1,2,3\}$};
            \coordinate (1b23) at (-1,1); \fill (1b23) circle [radius=2pt]; \node [left] at (1b23) {\tiny $\{1,-2,3\}$};
            \coordinate (12b3) at (1,-1); \fill (12b3) circle [radius=2pt]; \node [right] at (12b3) {\tiny $\{1,2,-3\}$};\coordinate (1b2b3) at (-1,-1); \fill (1b2b3) circle [radius=2pt]; \node [left] at (1b2b3) {\tiny $\{1,-2,-3\}$};
    \fill[gray!20] (1)--(12)--(123)--cycle;
    \fill[gray!20] (1)--(123)--(13)--cycle;
    \fill[gray!20] (1)--(13)--(1b23)--cycle;
    \fill[gray!20] (1)--(1b23)--(1b2)--cycle;
    \fill[gray!20] (1)--(1b2)--(1b2b3)--cycle;
    \fill[gray!20] (1)--(1b2b3)--(1b3)--cycle;
    \fill[gray!20] (1)--(1b3)--(12b3)--cycle;
    \fill[gray!20] (1)--(12b3)--(12)--cycle;
    
    \foreach \point in {12, 123, 13, 1b23, 1b2, 1b2b3, 1b3, 12b3} {
    \draw [thick, miter limit=2] (1)--(\point);}
    
    \draw [thick, miter limit=2] 
        (12)--(123)--(13)--(1b23)--(1b2)--(1b2b3)--(1b3)--(12b3)--cycle;
   \fill (1) circle [radius=2pt];     
   \node [above] at (1') {\tiny $\{1\}$};  
   
    \coordinate (b1) at (5.5,0);
    \coordinate (b1') at (6.1,-0.1);
    \coordinate (b12) at (7,0); \fill (b12) circle [radius=2pt]; \node [below right] at (b12) {\tiny $\{-1,2\}$};
    \coordinate (b13) at (5.5,1.5); \fill (b13) circle [radius=2pt]; \node [above right] at (b13) {\tiny $\{-1,3\}$};
    \coordinate (b1b2) at (4,0); \fill (b1b2) circle [radius=2pt]; \node [below left] at (b1b2) {\tiny $\{-1,-2\}$};
    \coordinate (b1b3) at (5.5,-1.5); \fill (b1b3) circle [radius=2pt]; \node [below left] at (b1b3) {\tiny $\{-1,-3\}$};
    \coordinate (b123) at (6.5,1); \fill (b123) circle [radius=2pt]; \node [right] at (b123) {\tiny $\{-1,2,3\}$};
    \coordinate (b1b23) at (4.5,1); \fill (b1b23) circle [radius=2pt]; \node [left] at (b1b23) {\tiny $\{-1,-2,3\}$};
    \coordinate (b12b3) at (6.5,-1); \fill (b12b3) circle [radius=2pt]; \node [right] at (b12b3) {\tiny $\{-1,2,-3\}$};
    \coordinate (b1b2b3) at (4.5,-1); \fill (b1b2b3) circle [radius=2pt]; \node [left] at (b1b2b3) {\tiny $\{-1,-2,-3\}$};
    \fill[gray!20] (b1)--(b12)--(b123)--cycle;
    \fill[gray!20] (b1)--(b123)--(b13)--cycle;
    \fill[gray!20] (b1)--(b13)--(b1b23)--cycle;
    \fill[gray!20] (b1)--(b1b23)--(b1b2)--cycle;
    \fill[gray!20] (b1)--(b1b2)--(b1b2b3)--cycle;
    \fill[gray!20] (b1)--(b1b2b3)--(b1b3)--cycle;
    \fill[gray!20] (b1)--(b1b3)--(b12b3)--cycle;
    \fill[gray!20] (b1)--(b12b3)--(b12)--cycle;
    \foreach \point in {b12, b123, b13, b1b23, b1b2, b1b2b3, b1b3, b12b3} {
        \draw [thick, miter limit=2] (b1)--(\point);
    }
    \draw [thick, miter limit=2] 
        (b12)--(b123)--(b13)--(b1b23)--(b1b2)--(b1b2b3)--(b1b3)--(b12b3)--cycle;
    \fill (b1) circle [radius=2pt];
    \node [above] at (b1') {\tiny $\{-1\}$}; 
        \end{tikzpicture}
     \end{center}
Moreover, $\widehat{(K_{B_3})}_{I}$ consists of two $0$-simplices labeled as $\{1\}$ and $\{-1\}$.
The simplicial map defined in Proposition~\ref{typeB_homotopy} indeed forms a homotopy equivalence.
\end{example}

From~\cite{Choi-Park-Park2017typeB}, $\widehat{(K_{B_{n}})_{I}}$ is a shellable complex, which implies that it is Cohen-Macaulay~\cite{Stanley1996book}.
Thus, $\widehat{(K_{B_{n}})_{I}}$ is homotopy equivalent to a bouquet of spheres of the same dimension.
From the Euler characteristic, the (rational) Betti numbers $\beta$ can be computed as follows:
\begin{equation}\label{typeB_I}
	\tilde{\beta}_{\lfloor \frac{r-1}{2} \rfloor}((K_{B_n})_I) = b_{k},
\end{equation}
where $I \subset [n]$ and $\left\vert I \right\vert = r$.

\begin{remark}\cite{Choi-Park-Park2017typeB}
  The integral cohomology of $X^\R_{B_n}$ is $p$-torsion free for all odd primes $p$.
\end{remark}

\section{Signed Permutations as elements in $H_\ast(X^\R_{B_n};\Q)$}\label{Sec:signed perm}

In this section, assume that $I$ is a subset of $[n] =\{1,\ldots,n\}$ with cardinality $r$.
A \emph{signed permutation} on $I$ is defined as a map
$$
x \colon I \to I \cup \{-i \colon i \in I\}
$$
such that $\left\vert x(i) \right\vert \neq \left\vert x(j)\right\vert$ whenever $i \neq j$, and $\fS^B_I$ denotes the set of signed permutations on $I$.
A signed permutation $x$ on $I$ is represented as  $x_r\cdots x_2x_1$, where 
$$
x^{-1}(x_r) < \cdots < x^{-1}(x_{2}) < x^{-1}(x_1).
$$
This representation can alternatively be grouped as:
$$
\begin{cases}
	[x_{2k-1}/x_{2k-2}x_{2k-3}/\cdots/x_2x_1], & r = 2k-1, \\
	[x_{2k}x_{2k-1}/x_{2k-2}x_{2k-3}/\cdots/x_2x_1], & r = 2k,
\end{cases}
$$
where terms are divided into blocks of length two, starting from the last pair.

For $x= x_r\cdots x_2x_1 \in \fS^B_I$, we define some notations:
\begin{itemize}
  \item $\bar{x}_i = -x_i$ for each $1 \leq i \leq r$,
  \item $x^\ast = \begin{cases}
	[\bar{x}_{2k-1}/x_{2k-3}x_{2k-2}/\cdots/ x_{2i-1}x_{2i}/ \cdots /x_{1} x_{2}] , & \mbox{if } r = 2k-1, \\
	[x_{2k-1}x_{2k}/\cdots/ x_{2i-1}x_{2i}/ \cdots /x_{1} x_{2}], & \mbox{if } r = 2k,
\end{cases}$
and
\item for each $1 \leq i \leq \lfloor \frac{r+1}{2} \rfloor$,
$$
\cF_i(x) = \begin{cases}
             \{x_1,\ldots,x_{2i-1}\}, & \mbox{if $r$ is odd}, \\
             \{\bar{x}_1,\ldots,\bar{x}_{2i-1}\}, & \mbox{if $r$ is even}.
           \end{cases}
$$
\end{itemize}

\begin{example}
    For $I = \{1,2,3,4,5\}$ and $J = \{1,2,3,4\}$, let us consider signed permutations $x = [1/\bar{3}2/\bar{5}\bar{4}] \in \fS_I^B$ and $y = [1\bar{4}/32] \in \fS_J^B$.
    We obtain that
    $$
    \begin{cases}
      \cF_1(x) = \{\bar{4}\}, \cF_2(x) = \{2,\bar{4},\bar{5}\}, \cF_3(x) = \{1,2,\bar{3},\bar{4},\bar{5}\}, \\
      \cF_1(y) = \{\bar{2}\}, \cF_2(y) = \{\bar{2},\bar{3},4\}.
    \end{cases}
    $$
    Note that $x^\ast = [\bar{1}/2\bar{3}/\bar{4}\bar{5}], y^\ast = [\bar{4}1/23]$.
    Then,
    $$
    \begin{cases}
      \cF_1(x^\ast) = \{\bar{5}\}, \cF_2(x^\ast) = \{\bar{3},\bar{4},\bar{5}\}, \cF_3(x^\ast) = \{\bar{1},2,\bar{3},\bar{4},\bar{5}\}, \\
      \cF_1(y^\ast) = \{\bar{3}\}, \cF_2(y^\ast) = \{\bar{1},\bar{2},\bar{3}\}.
    \end{cases}
    $$
\end{example}

We associate to $x$ a subcomplex $\square_x$ of $(K_{B_n})_I$ as
$$
\square_x = \{ \cF_1(x), \cF_1(x^\ast)\} \star \{\cF_2(x), \cF_2(x^\ast)\} \star \cdots \star \{\cF_{\lfloor \frac{r+1}{2} \rfloor}(x), \cF_{\lfloor \frac{r+1}{2} \rfloor}(x^\ast)\}
$$
where $\star$ is the simplicial join.
It is noteworthy that $\square_x$ is the boundary complex of the $k$-dimensional cross-polytope with fixed orientation embedded in $(K_{B_n})_I$.

Let $\Psi_I \colon \Q \left\langle \fS_I^B \right\rangle \to \widetilde{H}_{\lfloor \frac{r-1}{2} \rfloor}((K_{B_n})_I)$ be a $\Q$-linear map defined by
$$
\Psi_I(x) = [\square_x]
$$
for each $x \in \fS_I^B$.

\begin{example}
	Consider finite sets $I = \{1,3,4\}$ and $J = \{1,2,3,4\}$.
	Let $x = [1/\bar{4} 3] \in \fS_I^B $ and $y = [1\bar{4}/32] \in \fS_J^B $.
	We have $x^\ast = [\bar{1}/3\bar{4}]$, $y^\ast = [\bar{4}1/23]$.
	Then, we conclude that
	\begin{align*}
		\Psi_{I}(x) & = [\{\cF_1(x),\cF_1(x^\ast)\}\star\{\cF_2(x),\cF_2(x^\ast)\}] \\
        &= [\{\{3\},\{\bar{4}\}\}\star \{\{3,\bar{4},1\},\{3,\bar{4},\bar{1}\}\}] \\
		&= [\{\{3\},\{3,\bar{4},1\}\}- \{\{3\},\{3,\bar{4},\bar{1}\}\}-\{\{\bar{4}\},\{3,\bar{4},1\}\}+ \{\{\bar{4}\},\{3,\bar{4},\bar{1}\}\}],\\
		\Psi_{J}(y) & = [\{\cF_1(y),\cF_1(y^\ast)\}\star\{\cF_2(y),\cF_2(y^\ast)\}] \\ &= [\{\{\bar{2}\},\{\bar{3}\}\}\star\{\{\bar{2},\bar{3},4\},\{\bar{2},\bar{3},\bar{1}\}\}]\\ &= [\{\{\bar{2}\},\{\bar{2},\bar{3},4\}\}-\{\{\bar{2}\},\{\bar{2},\bar{3},\bar{1}\}\}-\{\{\bar{3}\},\{\bar{2},\bar{3},4\}\}+\{\{\bar{3}\},\{\bar{2},\bar{3},\bar{1}\}\}].
	\end{align*}
\end{example}

\begin{example}\label{example:K_3_I}
    Let us consider the type $B_3$.
    See Figure~\ref{fig:KBI3} for the induced full-subcomplex~$(K_{B_3})_{I}$ of~$K_{B_3}$, where~$I = \{1,2,3\}$.
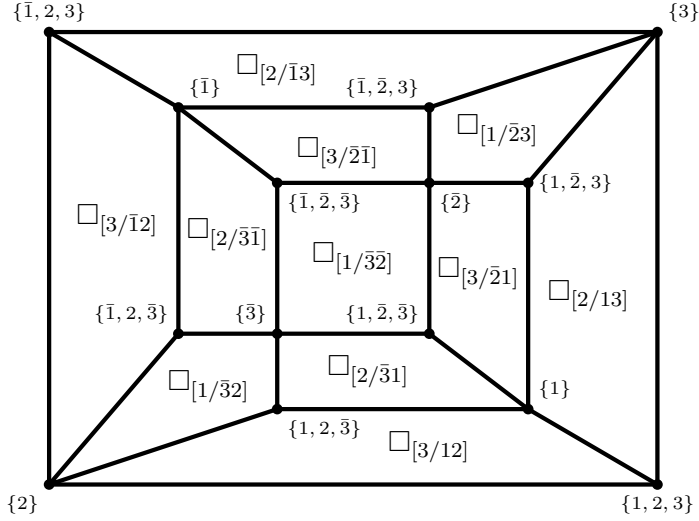
\begin{figure}
\begin{center}
    \begin{tikzpicture}[scale = 1]
    \coordinate (b1b2b3) at (-1,1); \fill (b1b2b3) circle [radius=2pt]; \node [below right] at (b1b2b3) {\tiny $\{\bar{1},\bar{2},\bar{3}\}$};
    \coordinate (1b2b3) at (1,-1); \fill (1b2b3) circle [radius=2pt]; \node [above left] at (1b2b3) {\tiny $\{1,\bar{2},\bar{3}\}$};
    \coordinate (b1b23) at (1,2); \fill (b1b23) circle [radius=2pt]; \node [above left] at (b1b23) {\tiny $\{\bar{1},\bar{2},3\}$};
    \coordinate (1b23) at (2.3,1); \fill (1b23) circle [radius=2pt]; \node [right] at (1b23) {\tiny $\{1,\bar{2},3\}$};
    \coordinate (1) at (2.3,-2); \fill (1) circle [radius=2pt]; \node [above right] at (1) {\tiny $\{1\}$};
    \coordinate (3) at (4,3); \fill (3) circle [radius=2pt]; \node [above right] at (3) {\tiny $\{3\}$};
    \coordinate (2) at (-4,-3); \fill (2) circle [radius=2pt]; \node [below left] at (2) {\tiny $\{2\}$};
    \coordinate (b1) at (-2.3,2); \fill (b1) circle [radius=2pt]; \node [above right] at (b1) {\tiny $\{\bar{1}\}$};
    \coordinate (b2) at (1,1); \fill (b2) circle [radius=2pt]; \node [below right] at (b2) {\tiny $\{\bar{2}\}$};
    \coordinate (b3) at (-1,-1); \fill (b3) circle [radius=2pt]; \node [above left] at (b3) {\tiny $\{\bar{3}\}$};
    \coordinate (12b3) at (-1,-2); \fill (12b3) circle [radius=2pt]; \node [below right ] at (12b3) {\tiny $\{1,2,\bar{3}\}$};
    \coordinate (b12b3) at (-2.3,-1); \fill (b12b3) circle [radius=2pt]; \node [above left] at (b12b3) {\tiny $\{\bar{1},2,\bar{3}\}$};
    \coordinate (123) at (4,-3); \fill (123) circle [radius=2pt]; \node [below] at (123) {\tiny $\{1,2,3\}$};
    \coordinate (b123) at (-4,3); \fill (b123) circle [radius=2pt]; \node [above] at (b123) {\tiny $\{\bar{1},2,3\}$};
    
    \draw [ultra thick, miter limit=2] (123)--(2)--(b123)--(3)--cycle;
    \draw [ultra thick, miter limit=2] (1b2b3)--(b3)--(b1b2b3)--(b2)--cycle;
    \draw [ultra thick, miter limit=2] (12b3)--(2)--(b12b3)--(b3)--cycle;
    \draw [ultra thick, miter limit=2] (b2)--(1b23)--(3)--(b1b23)--cycle;
    \draw [ultra thick, miter limit=2] (1)--(12b3);
    \draw [ultra thick, miter limit=2] (1)--(123);
    \draw [ultra thick, miter limit=2] (1)--(1b2b3);
    \draw [ultra thick, miter limit=2] (1)--(1b23);
    \draw [ultra thick, miter limit=2] (b1)--(b1b23);
    \draw [ultra thick, miter limit=2] (b1)--(b123);
    \draw [ultra thick, miter limit=2] (b1)--(b12b3);
    \draw [ultra thick, miter limit=2] (b1)--(b1b2b3);
    
    \node at (0,0) {$\square_{[1/\bar{3}\bar{2}]}$};
    \node at (-0.2,1.4) {$\square_{[3/\bar{2}\bar{1}]}$};
    \node at (0.2,-1.5) {$\square_{[2/\bar{3}1]}$};
    \node at (1.65,-0.2) {$\square_{[3/\bar{2}1]}$};
    \node at (-1.65,0.32) {$\square_{[2/\bar{3}\bar{1}]}$};
    \node at (1.9,1.7) {$\square_{[1/\bar{2}3]}$};
    \node at (-1.9,-1.7) {$\square_{[1/\bar{3}2]}$};
    \node at (3.1,-0.5) {$\square_{[2/13]}$};
    \node at (-3.1,0.5) {$\square_{[3/\bar{1}2]}$};
    \node at (-1,2.5) {$\square_{[2/\bar{1}3]}$};
    \node at (1,-2.5) {$\square_{[3/12]}$};
\end{tikzpicture}
     \end{center}
     \caption{A full-subcomplex $(K_{B_{3}})_I$ of $K_{B_{3}}$}\label{fig:KBI3}
     \end{figure}
Note that the outer square-shaped complex is $\square_{[1/23]}$ in this figure, and it follows that 
\begin{align*}
  \Psi_I([1/23]) = & \Psi_I([1/\bar{2}3]) - \Psi_I([1/\bar{3}2]) + \Psi_I([1/\bar{3}\bar{2}]) + \Psi_I([2/13]) - \Psi_I([2/\bar{1}3]) +\Psi_I([2/\bar{3}1]) \\
   & - \Psi_I([2/\bar{3}\bar{1}]) - \Psi_I([3/12]) +\Psi_I([3/\bar{1}2]) -\Psi_I([3/\bar{2}1]) +\Psi_I([3/\bar{2}\bar{1}]).
\end{align*}
\end{example}

\begin{example}\label{ex_K2}
Let $n \geq 4$, and~$I = \{x_{2i-1},x_{2i},x_{2i+1},x_{2i+2}\} \subset [n] = \{1,\ldots,n\}$.
Consider the full-subcomplex~$(K_{B_n})_I$ of the Coxeter complex $K_{B_n}$.
See Figure~\ref{fig:subsub} for a subcomplex of~$(K_{B_n})_I$ induced by
$$
\big\{\{x_1\},\{x_2\},\{x_3\},\{x_4\},\{x_1,x_2,x_3\},\{x_1,x_2,x_4\},\{x_1,x_3,x_4\},\{x_2,x_3,x_4\}\big\}.
$$
The outer square-shaped complex is $\square_{[\bar{x}_4\bar{x}_3/\bar{x}_2\bar{x}_1]}$.
\begin{figure}
  \begin{center}
        \begin{tikzpicture}[scale = 1]
    \coordinate [label = below right:{\tiny $\{x_3\}$}] (3) at (-1,1);
    \coordinate [label = above left:{\tiny $\{x_4\}$}] (4) at (1,-1);
    \coordinate [label = above right:{\tiny$\{x_1\}$}] (1) at (3,2);
    \coordinate [label = below left:{\tiny$\{x_2\}$}] (2) at (-3,-2);
    \coordinate [label = below right:{\tiny $\{x_1,x_3,x_4\}$}] (134) at (1,1);
    \coordinate [label = above left:{\tiny $\{x_2,x_3,x_4\}$}] (234) at (-1,-1);
    \coordinate [label = below:{\tiny $\{x_1,x_2,x_4\}$}] (124) at (3,-2);
    \coordinate [label = above:{\tiny $\{x_1,x_2,x_3\}$}] (123) at (-3,2);
    
    \foreach \point in {1,2,3,4,134,234,124,123} {
        \fill (\point) circle (2pt);
    }
    
    \draw [ultra thick, miter limit=2] (1)--(123)--(2)--(124)--cycle;
    \draw [ultra thick, miter limit=2] (3)--(134)--(4)--(234)--cycle;
    \draw [ultra thick, miter limit=2] (1)--(134);
    \draw [ultra thick, miter limit=2] (2)--(234);
    \draw [ultra thick, miter limit=2] (3)--(123);
    \draw [ultra thick, miter limit=2] (4)--(124); 
    
    \draw (0,0) node {$\square_{[\bar{x}_2\bar{x}_1/\bar{x}_4\bar{x}_3]}$};
    \draw (2,0) node {$\square_{[\bar{x}_3\bar{x}_2/\bar{x}_4\bar{x}_1]}$};
    \draw (-2,0) node {$\square_{[\bar{x}_4\bar{x}_1/\bar{x}_3\bar{x}_2]}$};
    \draw (0,1.5) node {$\square_{[\bar{x}_4\bar{x}_2/\bar{x}_3\bar{x}_1]}$};
    \draw (0,-1.5) node {$\square_{[\bar{x}_3\bar{x}_1/\bar{x}_4\bar{x}_2]}$};
\end{tikzpicture}
     \end{center}
  \caption{A subcomplex of $(K_{B_n})_I$}\label{fig:subsub}
\end{figure}
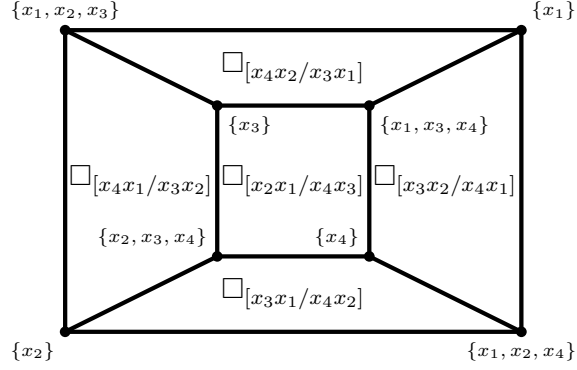
We observe that
$$
  \Psi_I([\bar{x}_4\bar{x}_3/\bar{x}_2\bar{x}_1]) = \Psi_I([\bar{x}_4\bar{x}_2/\bar{x}_3\bar{x}_1] - [\bar{x}_3\bar{x}_2/\bar{x}_4\bar{x}_1] - [\bar{x}_4\bar{x}_1/\bar{x}_3\bar{x}_2] + [\bar{x}_3\bar{x}_1/\bar{x}_4\bar{x}_2] - [\bar{x}_2\bar{x}_1/\bar{x}_4\bar{x}_3]).
$$
\end{example}

Refer to Definition~\ref{def M_I} for a recall of $\cH_i$ and $M_I$.
In the remainder of this section, we demonstrate that the subspace~$M_I$ of $\left\langle \fS_I^B \right\rangle$ is contained in the kernel of~$\Psi_I$.
To begin, we introduce following two lemmas.

\begin{lemma}\label{lemma:sec3_1}
    Let $x = x_r\cdots x_1$ be a signed permutation on $I$. 
    \begin{enumerate}
      \item For each $1 \leq i \leq \lfloor \frac{r}{2} \rfloor$, $\Psi_I(x +[x_r\cdots /x_{2i}x_{2i-1}/\cdots x_{1}]) = 0$.
      \item When $r$ is odd, $\Psi_I(x + [\bar{x}_r/\cdots x_{1}]) = 0$.
    \end{enumerate}
\end{lemma}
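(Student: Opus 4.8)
The plan is to deduce both identities from a single orientation principle for joins of $0$-spheres. Recall that $\square_x$ is a join $\{\cF_1(x),\cF_1(x^\ast)\}\star\cdots\star\{\cF_m(x),\cF_m(x^\ast)\}$ of $m=\lfloor\frac{r+1}{2}\rfloor$ ordered pairs of vertices, whose fundamental cycle is the product $\prod_{j=1}^{m}\big(\cF_j(x)-\cF_j(x^\ast)\big)$ expanded as an alternating sum of top simplices, exactly as computed in the example preceding the lemma. The elementary fact I would use is that interchanging the two vertices of a single factor, i.e.\ replacing $\cF_j(x)-\cF_j(x^\ast)$ by $\cF_j(x^\ast)-\cF_j(x)$, multiplies the whole cycle by $-1$. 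It therefore suffices, in each case, to exhibit the second summand as a signed permutation $x'$ for which $\square_{x'}$ is the \emph{same} subcomplex as $\square_x$ but with exactly one factor reoriented, whence $\Psi_I(x')=-\Psi_I(x)$.

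For part (1) I would let $x'$ be the signed permutation obtained from $x$ by transposing the two entries $x_{2i-1}$ and $x_{2i}$ of its $i$-th two-element block. The main step is to read off from the definitions of $\cF_j$ and $x^\ast$ that $\cF_i(x)=\{x_1,\dots,x_{2i-2},x_{2i-1}\}$ and $\cF_i(x^\ast)=\{x_1,\dots,x_{2i-2},x_{2i}\}$ (up to the global bar when $r$ is even), so the $i$-th factor differs only in the single entry $x_{2i-1}$ versus $x_{2i}$. For $j\neq i$, both $\cF_j(x)$ and $\cF_j(x^\ast)$ either omit both of $x_{2i-1},x_{2i}$ (when $j<i$) or contain both (when $j>i$, including the top factor in the odd case), so as sets they are insensitive to the transposition. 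Hence passing from $x$ to $x'$ leaves every factor with $j\neq i$ unchanged and swaps the two vertices of the $i$-th factor, giving $\Psi_I(x')=-\Psi_I(x)$ and thus $\Psi_I(\cH_1^i(x))=0$.

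For part (2), with $r=2k-1$ odd and $m=k$, I would let $x''$ be $x$ with its leading entry negated, $x_r\mapsto\bar x_r$. Here the relevant factor is the top one, $j=k$: one computes $\cF_k(x)=\{x_1,\dots,x_{2k-2},x_r\}$ while $\cF_k(x^\ast)=\{x_1,\dots,x_{2k-2},\bar x_r\}$, the two differing precisely in $x_r$ versus $\bar x_r$, because forming $x^\ast$ negates the leading singleton. Negating $x_r$ leaves $x_1,\dots,x_{r-1}$ untouched, so the factors $j<k$ are unchanged, while it interchanges $x_r$ and $\bar x_r$ and hence swaps the two vertices of the top factor (noting that $(x'')^\ast$ has leading singleton $x_r$). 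The orientation principle again yields $\Psi_I(x'')=-\Psi_I(x)$.

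The only genuine work is the bookkeeping in the computations of $\cF_j(x^\ast)$: since forming $x^\ast$ reverses the order within each two-element block and negates the leading singleton when $r$ is odd, I must check that these reorderings interact with the transposition (resp.\ negation) exactly so that a single factor is reoriented and no other factor is disturbed. I expect the parity split ($r$ even versus odd, together with the separate treatment of the top factor) to be the main source of care; once the explicit descriptions of $\cF_j(x)$ and $\cF_j(x^\ast)$ are in hand, both sign flips follow immediately from the product form of the fundamental cycle.
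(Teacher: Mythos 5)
Your proposal is correct and follows essentially the same route as the paper: the paper's proof likewise observes that $\square_{x'}$ (for $x'$ the block-transposed permutation, resp.\ the permutation with negated leading entry) is the same join with the two vertices of the $i$-th (resp.\ top) factor interchanged, so the fundamental class changes sign. Your explicit bookkeeping of $\cF_j(x)$ versus $\cF_j(x^\ast)$ is a correct elaboration of details the paper leaves implicit.
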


\begin{proof}
    For each $1 \leq i \leq \lfloor \frac{r}{2} \rfloor$, 
    \begin{align*}
            \Psi_I(x) & =[\{ \cF_1(x), \cF_1(x^\ast)\} \star \cdots \{\cF_i(x), \cF_i(x^\ast)\} \star \cdots \star \{\cF_{\lfloor \frac{r+1}{2} \rfloor}(x), \cF_{\lfloor \frac{r+1}{2} \rfloor}(x^\ast)\}] \\
             & =-[\{ \cF_1(x), \cF_1(x^\ast)\} \star \cdots \{\cF_i(x^\ast), \cF_i(x)\} \star \cdots \star \{\cF_{\lfloor \frac{r+1}{2} \rfloor}(x), \cF_{\lfloor \frac{r+1}{2} \rfloor}(x^\ast)\}] \\
             & =- \Psi_I([x_r\cdots /x_{2i}x_{2i-1}/\cdots x_{1}]).
          \end{align*}
    Moreover, when $r$ is odd, we have
    \begin{align*}
            \Psi_I(x) & =[\{ \cF_1(x), \cF_1(x^\ast)\} \star \cdots \{\cF_i(x), \cF_i(x^\ast)\} \star \cdots \star \{\cF_{\lfloor \frac{r+1}{2} \rfloor}(x), \cF_{\lfloor \frac{r+1}{2} \rfloor}(x^\ast)\}] \\
             & =-[\{ \cF_1(x), \cF_1(x^\ast)\} \star \cdots \{\cF_i(x), \cF_i(x^\ast)\} \star \cdots \star \{\cF_{\lfloor \frac{r+1}{2} \rfloor}(x^\ast), \cF_{\lfloor \frac{r+1}{2} \rfloor}(x)\}] \\
             & =- \Psi_I([\bar{x}_r/\cdots x_{1}]),
          \end{align*}
          as desired.
\end{proof}

\begin{lemma}\label{lemma:sec3_2}
    Assume that $r$ is even.
    For each $x = [x_rx_{r-1}/\cdots/x_2x_1] \in \fS_I^B$,
    $$
    \Psi_I(x - [x_r\bar{x}_{r-1}/\cdots /x_2x_1] + [x_{r-1}\bar{x}_{r} /\cdots /x_2x_1] -[\bar{x}_{r-1}\bar{x}_r /\cdots /x_2x_1])=0.
    $$
\end{lemma}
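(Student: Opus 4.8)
The plan is to work entirely at the chain level and exploit that, since $r=2k$ is even, the four signed permutations appearing in $\cH_4(x)$ differ only in their top block: in all four, positions $1,\dots,r-2$ carry the same letters $x_1,\dots,x_{r-2}$, and only the pair occupying positions $r-1,r$ varies. I will show that the alternating sum of the four cross-polytope cycles $\square_w$ prescribed by $\cH_4$ is literally the zero chain, whence its class vanishes a fortiori.

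First I would isolate the common part of the four cycles. Recall $\square_w=\{\cF_1(w),\cF_1(w^\ast)\}\star\cdots\star\{\cF_k(w),\cF_k(w^\ast)\}$, where, as in the examples, each ordered factor $\{\cF_i(w),\cF_i(w^\ast)\}$ is expanded with signs and so contributes the difference $0$-chain $\cF_i(w)-\cF_i(w^\ast)$. For $r$ even one has $\cF_i(w)=\{\bar{w}_1,\dots,\bar{w}_{2i-1}\}$, which for $1\le i\le k-1$ involves only the bottom letters $w_1,\dots,w_{2i-1}$ with $2i-1\le r-3$; since the bottom blocks $w_1,\dots,w_{r-2}$ (and hence their $\ast$-swaps) coincide with those of $x$ for all four permutations, both $\cF_i(w)$ and $\cF_i(w^\ast)$ are the same for all four. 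Thus the lower join
$$
\Gamma := \{\cF_1(w),\cF_1(w^\ast)\}\star\cdots\star\{\cF_{k-1}(w),\cF_{k-1}(w^\ast)\}
$$
is one and the same chain for all four, and $\square_w=\Gamma\star\bigl(\cF_k(w)-\cF_k(w^\ast)\bigr)$.

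The core computation is the top factor. Writing a permutation with top block $(a,b)$ as $w=[ab/\cdots]$ (so $a=w_r$, $b=w_{r-1}$ and $w^\ast=[ba/\cdots]$) and setting $B:=\{\bar{x}_1,\dots,\bar{x}_{r-2}\}$, the top entries give
$$
\cF_k(w)=B\cup\{\bar{b}\},\qquad \cF_k(w^\ast)=B\cup\{\bar{a}\}.
$$
Specializing the top block to $(x_r,x_{r-1})$, $(x_r,\bar{x}_{r-1})$, $(x_{r-1},\bar{x}_r)$, $(\bar{x}_{r-1},\bar{x}_r)$ and abbreviating $P=B\cup\{\bar{x}_{r-1}\}$, $Q=B\cup\{\bar{x}_r\}$, $R=B\cup\{x_{r-1}\}$, $S=B\cup\{x_r\}$, the four top $0$-chains become $P-Q$, $R-Q$, $S-P$, $S-R$, respectively. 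The alternating sum prescribed by $\cH_4$ is then the zero $0$-chain:
$$
(P-Q)-(R-Q)+(S-P)-(S-R)=0.
$$
By bilinearity of the simplicial join in the common factor $\Gamma$, the alternating sum of the four cycles equals $\Gamma\star 0=0$ as a chain, so $\Psi_I(\cH_4(x))=0$.

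The only delicate step is the bookkeeping of the top factor: for even $r$ one must remember that $w^\ast$ merely swaps the two entries of each block \emph{without} negating the top entry (in contrast to the odd case), and that $\cF_k$ records precisely the single top letter $\bar{w}_{r-1}$ together with the common set $B$. Once these are pinned down, the vanishing is purely formal and requires no homological input beyond the definition of $\Psi_I$; it is the same mechanism, with one extra term, as the cancellation underlying Lemma~\ref{lemma:sec3_1}.
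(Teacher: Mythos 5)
Your proof is correct and is essentially the paper's own argument: both factor each $\square_w$ as the common lower join (your $\Gamma$, the paper's $\square_y$) joined with the top $0$-chain $\cF_k(w)-\cF_k(w^\ast)$, and then check that the four top $0$-chains cancel in the alternating sum, so the whole combination vanishes already at the chain level. The only cosmetic difference is that the paper spells out the degenerate case $r=2$ separately, which in your formulation is just the empty join.
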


\begin{proof}
    For each $r > 2$, define the set $\cG = \{x_1,\ldots,x_{r-2}\}$, and a signed permutation
    $$
    y = [x_{r-2}x_{r-3}/\cdots/x_2x_1] \in \fS^B_{I \setminus \{\left\vert x_r \right\vert, \left\vert x_{r-1} \right\vert\}}
    $$
    For $r =2$, we define both $\cG$ and $\square_y$ to be the empty set.
    Then, we have
    \begin{align*}
      \Psi_I(x) & = [\square_y \star \{\cG\cup\{\bar{x}_{r-1}\}\}] - [\square_y \star \{\cG\cup\{\bar{x}_{r}\}\}], \\
      -\Psi_I([x_r\bar{x}_{r-1}/\cdots /x_2x_1]) & = -[\square_y \star \{\cG\cup\{x_{r-1}\}\}] + [\square_y \star \{\cG\cup\{\bar{x}_{r}\}\}], \\
      \Psi_I([x_{r-1}\bar{x}_{r} /\cdots /x_2x_1]) & = [\square_y \star \{\cG\cup\{x_{r}\}\}] - [\square_y \star \{\cG\cup\{\bar{x}_{r-1}\}\}], \\
      -\Psi_I([\bar{x}_{r-1}\bar{x}_{r} /\cdots /x_2x_1]) & = -[\square_y \star \{\cG\cup\{x_{r}\}\}] + [\square_y \star \{\cG\cup\{x_{r-1}\}\}].
    \end{align*}
    This confirms the lemma.
\end{proof}

By Lemma~\ref{lemma:sec3_1} and Lemma~\ref{lemma:sec3_2}, we have $\cH_1 \cup \cH_3 \subset \ker \Psi_I$ and $\cH_4 \subset \ker \Psi_I$, respectively.
Hence, to confirm that $M_I$ is a subspace of~$\ker \Psi_I$, it remains to verify that~$\ker \Psi_I$ also includes $\cH_2$ and~$\cH_5$.

For a subset $J \subset I$, we define a map $\rho_J \colon \fS^B_I \to \fS^B_J$, where $\rho_J(x)$ is a \emph{subpermutation} of~$x \in \fS_I^B$.
For each $1 \leq i \leq \lfloor \frac{r-1}{2} \rfloor$, define the set $\fI_i$ as
$$
\begin{cases}
  \{r-2,r-1,r\}, & \mbox{if $r$ is odd and $i = \lfloor \frac{r+1}{2} \rfloor -1$},  \\
  \{2i-1,2i,2i+1,2i+2\}, & \mbox{otherwise}.
\end{cases}
$$
Let $x = x_rx_{r-1}\cdots x_1$ be a signed permutation on $I$.
We define a subset $\cT^x_i$ of $\fS_I^B$ by
$$
\cT^x_i = \{y = y_r\cdots y_1 \in \fS_I^B \colon x_j = y_j \text{ for each $j \notin \fI_i$}\}.
$$
When we define the subset $J_i = \{\left\vert x_j \right\vert \colon j \in \fI_i\}$ of $I$ for each~$1 \leq i \leq \lfloor \frac{r-1}{2} \rfloor$, we have the following equivalence:
$$
\sum_{y \in \cT^x_i}{a_y}y = 0 \text{ if and only if } \sum_{y \in \cT^x_i}{a_y}\rho_{J_i}(y) = 0,
$$
where $a_y \in \{-1,0,1\}$.
From Example~\ref{example:K_3_I} and Example~\ref{ex_K2}, it follows that $\cH_5 \subset \ker \Psi_I$ when $r = 3$ and $\cH_2 \subset \ker \Psi_I$ when $r =4$, respectively.
Therefore, we conclude that $\cH_2 \cup \cH_5 \subset \ker \Psi_I$, which confirms Theorem~\ref{kernel}.

\begin{theorem}\label{kernel}
	The image $\Psi_I(M_I) = \{\Psi_I(x) \colon x \in M_I\}$ is trivial.
\end{theorem}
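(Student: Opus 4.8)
The plan is to exploit the fact that $M_I$ is, by Definition~\ref{def M_I}, spanned by the five families $\cH_1,\dots,\cH_5$, so that proving $\Psi_I(M_I)=0$ amounts to checking that every generator of each family lies in $\ker\Psi_I$. Three of the five families are already handled: Lemma~\ref{lemma:sec3_1} gives $\cH_1\cup\cH_3\subset\ker\Psi_I$ and Lemma~\ref{lemma:sec3_2} gives $\cH_4\subset\ker\Psi_I$. Thus the entire burden reduces to showing $\cH_2\cup\cH_5\subset\ker\Psi_I$.

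For these two remaining families I would use a localization argument. The key structural observation is that $\square_y$ is an iterated simplicial join whose factors are indexed by the length-two blocks of $y$, and that each generator $\cH_2^i(x)$ (respectively $\cH_5(x)$) is an integral combination of signed permutations $y$ all lying in $\cT^x_i$, i.e. agreeing with $x$ in every coordinate outside the block-positions $\fI_i$. Consequently, in $\Psi_I$ of such a combination the join factors coming from blocks outside $\fI_i$ are common to every term and can be pulled out, so that the class vanishes as soon as the corresponding \emph{local} combination vanishes. This is exactly the content of the equivalence
$$
\sum_{y\in\cT^x_i}a_y\,y=0 \iff \sum_{y\in\cT^x_i}a_y\,\rho_{J_i}(y)=0
$$
established above, which reduces the identity in $\fS^B_I$ to one in $\fS^B_{J_i}$ with $|J_i|\in\{3,4\}$.

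Having localized, I would finish by invoking the base cases. For $\cH_2$ the relevant restricted set $J_i$ has four elements, and the explicit computation in Example~\ref{ex_K2} shows that $\Psi_{J_i}$ annihilates the corresponding $r=4$ relation; for $\cH_5$ the set $J_i$ has three elements and Example~\ref{example:K_3_I} shows $\Psi_{J_i}$ annihilates the $r=3$ relation. Pushing these local vanishings back up through the join via the equivalence yields $\cH_2\cup\cH_5\subset\ker\Psi_I$, and combined with the three families already dispatched we conclude $M_I\subset\ker\Psi_I$, that is, $\Psi_I(M_I)$ is trivial.

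The hard part is the localization step itself: one must verify that passing to $\rho_{J_i}$ genuinely preserves, up to the uniform sign $a_y\in\{-1,0,1\}$, the coefficient of each join factor, and in particular that the parity switch between the odd and even cases in the definition of $\cF_i$ does not introduce inconsistent signs when $|J_i|$ and $|I|$ have different parity. Once the bookkeeping in the equivalence is secured, the two base-case examples carry the full weight of the argument, so the remaining work is purely the combinatorial verification that the $r=3$ and $r=4$ relations are the images of $\cH_5$ and $\cH_2$ under restriction.
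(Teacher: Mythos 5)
Your proposal follows essentially the same route as the paper: it dispatches $\cH_1\cup\cH_3$ and $\cH_4$ via Lemma~\ref{lemma:sec3_1} and Lemma~\ref{lemma:sec3_2}, and then reduces $\cH_2$ and $\cH_5$ to the local sets $J_i$ through the equivalence $\sum_{y\in\cT^x_i}a_y\,y=0\iff\sum_{y\in\cT^x_i}a_y\,\rho_{J_i}(y)=0$, invoking Example~\ref{ex_K2} and Example~\ref{example:K_3_I} as the $r=4$ and $r=3$ base cases. This matches the paper's argument, and your flagging of the sign bookkeeping in the localization step as the point requiring care is a fair observation about the part the paper leaves implicit.
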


\section{$B$-snakes as elements of a basis of $H^\ast(X^\R_{B_n};\Q)$}\label{Sec:B-snakes}

Let $I$ be a subset of $[n] =\{1,\ldots,n\}$ with cardinality $r$.
For each $x \in \fS^B_I$, let $\bar{x}$ denote the signed permutation on $I$ defined by
\begin{equation}\label{barx}
  \bar{x}(i) = -x(i)
\end{equation}
for each $i \in I$.
Consider the signed permutations $x = x_r\cdots x_1$ and $y = y_r\cdots y_1$ on $I$.
We define a relation $x \prec y$ as follows; there is $1 \leq i \leq \lfloor \frac{r}{2} \rfloor$ such that
\begin{enumerate}
  \item $x_{2i-1} + x_{2i+1} < y_{2i-1} + y_{2i+1}$, and
  \item $x_{2j-1} + x_{2j+1} = y_{2j-1} + y_{2j+1}$ for all $1 \leq j < i$.
\end{enumerate}
In addition, define a relation $x \vartriangleleft y$ by
$$
\begin{cases}
  x \prec y, & \mbox{if $r$ is odd} \\
  \bar{x} \prec \bar{y}, & \mbox{if $r$ is even}.
\end{cases}
$$
Therefore, the relation $\trianglelefteq$ forms a partial order on $\fS^B_I$.

\begin{example}
    Let $I_1 = \{1,2,3,4,5\}$ and $I_2 = \{1,2,3,4,5,6\}$.
    \begin{enumerate}
      \item Consider signed permutations $x = [2/45/31]$, $y = [4/\bar{5}2/31]$, and $z = [4/21/\bar{5}3]$ on $I_1$.
      We find that $z \prec y \prec x$, and thus, $z \triangleleft y \triangleleft z$.
      \item For signed permutations $x = [\bar{1}\bar{6}/54/\bar{2}\bar{3}]$, $y = [61/54/\bar{2}\bar{3}]$, and $z = [61/5\bar{2}/4\bar{3}]$ on $I_2$, we compute
          $$
          \bar{x} = [16/\bar{5}\bar{4}/23], \bar{y} = [\bar{6}\bar{1}/\bar{5}\bar{4}/23], \bar{z} = [\bar{6}\bar{1}/\bar{5}2/\bar{4}3].
          $$
          From this, $\bar{z} \prec \bar{y} \prec \bar{x}$, and it follows that $z \triangleleft y \triangleleft x$.
    \end{enumerate}
\end{example}

A signed permutation $x_rx_{r-1}\cdots x_1 \in \fS^B_I$ is called a \emph{$B$-snake} on $I$ if it satisfies
$$
0 < x_r > x_{r-1} < \cdots x_1.
$$
The set of $B$-snakes on $I$ is denoted by $\fA^B_I$.
For $B$-snakes $x$ and $y$ on $I$, the following lemma holds.

\begin{lemma}\label{odd_order_lemma}
	Let $x= x_r\cdots x_1$ be a $B$-snake on $I$.
	If $y = y_r\cdots y_1 \in \fA_I^B$ and the simplex
	$$\{\cF_1(x),\cF_2(x),\ldots,\cF_{\lfloor \frac{r+1}{2} \rfloor}(x)\}$$
	is a face of $\square_y$, then $x \trianglelefteq y$.
\end{lemma}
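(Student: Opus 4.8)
The plan is to convert the geometric hypothesis into a purely combinatorial, level-by-level comparison, and then to read the partial order off the $B$-snake structure. Throughout I write $m=\lfloor\frac{r+1}{2}\rfloor$.

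\emph{Step 1 (reduction to level-wise matching).} Since $\square_y$ is the join $\{\cF_1(y),\cF_1(y^\ast)\}\star\cdots\star\{\cF_m(y),\cF_m(y^\ast)\}$ of $m$ two-point complexes, every face of $\square_y$ contains at most one vertex from each factor. The simplex $\{\cF_1(x),\ldots,\cF_m(x)\}$ has $m$ pairwise distinct vertices (the chain is strictly increasing), and $|\cF_i(x)|=2i-1$ while both vertices of the $j$-th factor have cardinality $2j-1$; hence the only way this simplex can sit inside $\square_y$ is with $\cF_i(x)$ landing in the $i$-th factor for every $i$. Thus the hypothesis is equivalent to
\[
\cF_i(x)\in\{\cF_i(y),\cF_i(y^\ast)\}\qquad(1\le i\le m).
\]
A direct unwinding of the definition of $y^\ast$ shows that the two candidate vertices agree on $\{y_1,\dots,y_{2i-2}\}$ and differ only in the last slot: for odd $r$ one has $\cF_i(y)=\{y_1,\dots,y_{2i-2},y_{2i-1}\}$ and $\cF_i(y^\ast)=\{y_1,\dots,y_{2i-2},y_{2i}\}$ for $i<m$, together with $\cF_m(y^\ast)=\{y_1,\dots,y_{2m-2},\bar y_{2m-1}\}$; for even $r$ the same holds after negating every entry.

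\emph{Step 2 (odd case).} Record at each level a bit $\epsilon_i\in\{0,1\}$ according to whether $\cF_i(x)=\cF_i(y)$ or $\cF_i(x)=\cF_i(y^\ast)$, and let $i_0$ be the least index with $\epsilon_{i_0}=1$; if no such index exists the two chains coincide and $x=y$. The structural fact I would use is that, because $x$ is a $B$-snake whose peaks occupy the odd positions, the peak $x_{2i-1}$ is exactly the larger of the two elements of the incremental pair $\cF_i(x)\setminus\cF_{i-1}(x)$ (and likewise for $y$). From $\epsilon_j=0$ for $j<i_0$ I get that $x$ and $y$ share the initial entries, so $x_{2j-1}=y_{2j-1}$ for all $j<i_0$; from $\epsilon_{i_0}=1$ the incremental pair at level $i_0$ is $\{y_{2i_0-2},y_{2i_0}\}$ (or $\{y_{2m-2},\bar y_{2m-1}\}$ at the top), which consists of the two valleys flanking the peak $y_{2i_0-1}$, so
\[
x_{2i_0-1}=\max\{y_{2i_0-2},y_{2i_0}\}<y_{2i_0-1},
\]
the top case using $\bar y_{2m-1}<0<y_{2m-1}$. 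Comparing the sequences of consecutive-peak sums, they agree below level $i_0-1$ and drop strictly at level $i_0-1$, giving $x\prec y$ and hence $x\trianglelefteq y$ (this handles $i_0\ge 2$; the flip $i_0=1$ is treated below).

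\emph{Step 3 (even case and the boundary).} The even case runs in parallel: now the peaks of a $B$-snake sit in the even positions, so $x_{2i-1}$ is the \emph{smaller} element of each incremental pair, the first flip forces $x_{2i_0-1}=\min\{y_{2i_0-2},y_{2i_0}\}>y_{2i_0-1}$, and after passing to $\bar x,\bar y$ this is precisely the inequality witnessing $\bar x\prec\bar y$, that is $x\vartriangleleft y$. I expect the one genuinely delicate point to be the boundary flip $i_0=1$: because the order pairs \emph{consecutive} peaks, a flip at the bottom is not seen at a (nonexistent) level $i_0-1$, so there I would instead verify the level-$1$ inequality $x_1+x_3<y_1+y_3$ (respectively $>$) directly by a short case split on $\epsilon_2$, each branch reducing to the fact that a valley is strictly dominated by its adjacent peaks; the very small cases $r\le 2$, where the comparison collapses to a single top entry, are checked by hand. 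Everything else is bookkeeping of which slot each $\epsilon_i$ selects.
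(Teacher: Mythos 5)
Your overall strategy coincides with the paper's: reduce the face condition to the level-wise statement $\cF_i(x)\in\{\cF_i(y),\cF_i(y^\ast)\}$, use the equalities below the first ``flip'' to force $x_\ell=y_\ell$ there, and use the peak/valley structure of $B$-snakes (plus the sign of the top entry in the odd case) to extract a strict inequality at the first flip. That bookkeeping, including the identification of the incremental pairs $\cF_i(x)\setminus\cF_{i-1}(x)$ and the treatment of the top-level negation, is sound.

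The substantive issue is which order you prove the inequality for. You compare the sums $x_{2i-1}+x_{2i+1}$ of consecutive peaks, which is indeed what Section 4 literally prints; but that printed definition must be a typo: for even $r$ the index $2i+1$ exceeds $r$ when $i=\lfloor r/2\rfloor$, the paper's own example fails under it (for $y=[4/\bar{5}2/31]$ and $z=[4/21/\bar{5}3]$ one gets $y_1+y_3=3<4=z_1+z_3$, contradicting the asserted $z\prec y$), and the paper's own proof of this lemma manifestly works with the block sums $x_{2i-1}+x_{2i}$ (its CASE~2 extracts $y_{2i+1}+y_{2i+2}=x_{2i+1}+x_{2i+3}>x_{2i+1}+x_{2i+2}$ and concludes from there). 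Under the block-sum order your first-flip analysis is not yet sufficient: knowing $x_\ell=y_\ell$ for $\ell\le 2i_0-3$ and $x_{2i_0-1}<y_{2i_0-1}$ does not locate the first index where the block sums differ, since the difference at level $i_0-1$ equals $x_{2i_0-2}-y_{2i_0-2}$, and $x_{2i_0-2}$ may be either $y_{2i_0-2}$ (equality at level $i_0-1$, pushing the comparison to level $i_0$, where one must then control $x_{2i_0}$) or $y_{2i_0}$ (strict drop at level $i_0-1$); supplying this extra case split is precisely the content of the paper's CASE~2. In short, your argument correctly proves the lemma for the order as literally defined, but that order is not the one the paper actually uses downstream; to match the paper you need to rerun the first-difference analysis on the sums $x_{2i-1}+x_{2i}$, tracking the non-peak element of each incremental pair as well as the peak.
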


\begin{proof}
  We immediately obtain that $x_1$ must be $y_1$ or $y_2$.
  For each $1 \leq i < \lfloor \frac{r}{2} \rfloor$, one of the following conditions holds:
  \begin{enumerate}
    \item $x_{2i} \in \{y_{2i-1},y_{2i}\},x_{2i+1} \in \{y_{2i+1},y_{2i+2}\}$, or
    \item $x_{2i+1} \in \{y_{2i-1},y_{2i}\},x_{2i} \in \{y_{2i+1},y_{2i+2}\}$.
  \end{enumerate}
  Then, either $y_1 + y_2 = x_1 + x_2$ or $y_1 + y_2 = x_1 + x_3$.
  
  \medskip

  \noindent \underline{\textbf{CASE 1. $y_1 + y_2 = x_1 + x_3$ :}}
  If $r$ is odd, then $x_3 > x_2$, and consequently, $x_1 + x_2 < x_1 + x_3 = y_1 + y_2$.
  If $r$ is even, then $x_3 < x_2$, which implies $\bar{x}_1 + \bar{x}_2 < \bar{x}_1 + \bar{x}_3 = \bar{y}_1 + \bar{y}_2$.
  In this case, we conclude that $x \triangleleft y$.
  
    \medskip

  \noindent \underline{\textbf{CASE 2. $y_1 + y_2 = x_1 + x_2$ :}}
  Let $i$ be the largest integer $1 \leq i \leq \lfloor \frac{r}{2} \rfloor$ such that
  $$
  x_{2j-1}+x_{2j} = y_{2j-1} + y_{2j} \text{ for all }1 \leq j \leq i.
  $$
  Since one of $y_1$ and $y_2$ is $x_1$, the other must be $x_2$.
  As $y$ is a $B$-snake on $I$, it follows that $y_1 = x_1$ and $y_2 = x_2$.
  By a similar argument, we conclude that $y_k = x_k$ for all $1 \leq k \leq 2i$.
  
  Consider the case where $i = \lfloor \frac{r}{2} \rfloor$.
  If $r$ is even, it is straightforward that  $x = y$.
  If $r$ is odd, then $\left\vert x_r \right\vert = \left\vert y_r \right\vert$.
  Since both $x$ and $y$ are $B$-snakes, both $x_r$ and $y_r$ are positive, \ie $x_r = y_r$.
  Thus, we conclude that $x = y$.
  
  Now, let us consider the case where $i < \lfloor \frac{r}{2} \rfloor$.
  Since $x_{2i} = y_{2i} \in \{y_{2i-1},y_{2i}\}$, we have $$x_{2i+1} \in \{y_{2i+1},y_{2i+2}\}.$$
  Furthermore, since $x_{2i+1} + x_{2i+2} \neq y_{2i+1}+y_{2i+2}$, it follows that $x_{2i+2} \notin \{y_{2i+1},y_{2i+2}\}$.
  Thus, it follows that
  $$
  x_{2i+3} \in \{y_{2i+1},y_{2i+2}\},
  $$
  and then, $y_{2i+1} + y_{2i+2} = x_{2i+1} + x_{2i+3}$.
  Note that
  $$
  \begin{cases}
    x_{2i+1} + x_{2i+3} > x_{2i+1} + x_{2i+2}, & \mbox{if $r$ is odd}, \\
    x_{2i+1} + x_{2i+3} < x_{2i+1} + x_{2i+2}, & \mbox{if $r$ is even}.
  \end{cases}
  $$
  Therefore, we have $x \triangleleft y$.
\end{proof}

Lemma~3.4 in \cite{Choi-Park-Park2017typeB} establishes a homotopy equivalence between $(K_{B_{n}})_{I}$ and the subcomplex~$\widehat{(K_{B_{n}})_{I}}$ of $(K_{B_{n}})_{I}$ which is obtained by iteratively removing vertices from $(K_{B_{n}})_{I}$ that is not a subset of $I \cup \{-i \colon i \in I\}$.
Since $\square_x \in \widehat{(K_{B_{n}})_{I}}$ for all $x \in \fA^B_I$, to prove Lemma~\ref{basis_B}, it suffices to demonstrate that $\Psi_I(\fA^B_I)$ forms a $\Q$-basis of $\widetilde{H}_{\lfloor \frac{r-1}{2} \rfloor}(\widehat{(K_{B_{n}})_{I}})$.

\begin{lemma}\label{basis_B}
Let $I$ be a subset of $[n]$ with cardinality $r$.
The image $\Psi_I(\fA^B_I) = \{\Psi_{I}(x) \colon x \in \fA^B_I\}$ is a $\Q$-basis of $\widetilde{H}_{\lfloor \frac{r-1}{2} \rfloor}((K_{B_{n}})_{I})$.
\end{lemma}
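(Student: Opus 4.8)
The plan is to realize the classes $\Psi_I(x)$ as genuine simplicial cycles and then run a triangular, leading-term argument against the partial order $\trianglelefteq$. By Proposition~\ref{typeB_homotopy} it suffices to work inside $\widehat{(K_{B_{n}})_{I}}$, every vertex of which is an odd-support subset of $I \cup \{-i \colon i \in I\}$. Since a chain of such vertices has strictly increasing (odd) support sizes and there are only $\lfloor \frac{r+1}{2}\rfloor$ odd integers in $[1,r]$, the complex $\widehat{(K_{B_{n}})_{I}}$ has dimension exactly $d := \lfloor \frac{r-1}{2}\rfloor$. In particular it has no $(d+1)$-simplices, so there are no $d$-boundaries and $\widetilde{H}_{d}(\widehat{(K_{B_{n}})_{I}})$ is canonically identified with the space $Z_d$ of simplicial $d$-cycles, a subspace of the chain group $C_d$. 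Each $\square_x$, being the boundary of a $d$-dimensional cross-polytope (a $d$-sphere), lies in $Z_d$, so linear independence of $\{\Psi_I(x) \colon x \in \fA^B_I\}$ in homology is equivalent to linear independence of the chains $\{\square_x\}$ in $C_d$. Moreover, by \eqref{typeB_I} we have $\dim_\Q \widetilde{H}_{d}((K_{B_{n}})_{I}) = b_r$, while $|\fA^B_I| = b_r$ because the Springer number $b_r$ counts the $B$-snakes on an $r$-element set; hence the number of candidate vectors already matches the dimension, and it suffices to prove independence.

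The core of the argument is the triangularity supplied by Lemma~\ref{odd_order_lemma}. For a $B$-snake $x$ set $\sigma_x := \{\cF_1(x), \ldots, \cF_{\lfloor \frac{r+1}{2}\rfloor}(x)\}$, which is a $d$-simplex occurring in $\square_x$ with coefficient $\pm 1$. A $d$-simplex belongs to $\square_y$ precisely when, for each $i$, its $i$-th vertex lies in $\{\cF_i(y), \cF_i(y^\ast)\}$; therefore, if the coefficient $M_{x,y}$ of $\sigma_x$ in $\square_y$ is nonzero, then $\sigma_x$ is a face of $\square_y$, and Lemma~\ref{odd_order_lemma} forces $x \trianglelefteq y$. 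Thus $M_{x,y} \neq 0$ implies $x \trianglelefteq y$, while $M_{x,x} = \pm 1$, so the matrix $(M_{x,y})_{x,y \in \fA^B_I}$ is triangular with respect to $\trianglelefteq$ and has invertible diagonal.

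To conclude, suppose $\sum_{y \in \fA^B_I} c_y \square_y = 0$ with some $c_y \neq 0$, and choose $x_0$ maximal for $\trianglelefteq$ among $\{y \colon c_y \neq 0\}$. Extracting the coefficient of $\sigma_{x_0}$ gives $\sum_y c_y M_{x_0, y} = 0$; every nonzero summand requires $x_0 \trianglelefteq y$ and $c_y \neq 0$, which by maximality forces $y = x_0$, leaving $\pm c_{x_0} = 0$, a contradiction. Hence the $\square_x$ are independent, and combined with the count above they form the claimed $\Q$-basis. I expect the main obstacle to be entirely packaged in Lemma~\ref{odd_order_lemma}, namely the geometric monotonicity that the leading simplex $\sigma_x$ of a $B$-snake can sit inside $\square_y$ only for $y \trianglerighteq x$; once that is in hand, the dimension count and the extremal-element argument are routine.
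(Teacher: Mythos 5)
Your proposal is correct and follows essentially the same route as the paper: identify top-dimensional homology classes with cycles in $\widehat{(K_{B_{n}})_{I}}$, use Lemma~\ref{odd_order_lemma} to get triangularity of the coefficient of the leading simplex $\{\cF_1(x),\ldots,\cF_{\lfloor \frac{r+1}{2}\rfloor}(x)\}$ with respect to $\trianglelefteq$, extract coefficients at maximal elements inductively, and finish with the dimension count from \eqref{typeB_I}. Your write-up merely makes explicit the leading-term matrix $(M_{x,y})$ that the paper's proof uses implicitly.
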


\begin{proof}
Since the dimension of $\widehat{(K_{B_{n}})_{I}}$ is $\lfloor \frac{r-1}{2} \rfloor$, every element of $\widetilde{H}_{\lfloor \frac{r-1}{2} \rfloor}(\widehat{(K_{B_{n}})_{I}})$ can be regarded as a cycle of $C_{\lfloor \frac{r-1}{2} \rfloor}(\widehat{(K_{B_{n}})_{I}})$.
Assume that a $\Q$-linear combination $\cX$ of $\{\square_x \colon x \in \fA^B_I\}$ is zero.
	Let $\mathfrak{M}$ denote the set of maximal elements of $(\fA^B_I,\prec)$.
	By Lemma~\ref{odd_order_lemma}, for each $x \in \mathfrak{M}$, the coefficient of $x$ in $\cX$ should be zero for all $x \in \mathfrak{M}$.
	Repeating this argument for the maximal elements of $(\fA^B_I \setminus \mathfrak{M} , \prec)$, and continuing inductively, we find that all coefficients in $\cX$ are zero.
	Hence, we have $\Psi_I(\fA^B_I)$ is $\Q$-linearly independent.
	Combining this with~\eqref{typeB_I}, we conclude that $\Psi_I(\fA^B_I)$ is a $\Q$-basis of $\widetilde{H}_{\lfloor \frac{r-1}{2} \rfloor}(\widehat{(K_{B_{n}})_{I}})$.
\end{proof}

Theorem~\ref{kernel} establishes the well-definedness of the composition
\begin{equation}\label{map}
    \Q\left\langle \fS^B_I \right\rangle / M_I \overset{\pi^{-1}}{\longrightarrow} \Q\left\langle \fS^B_I \right\rangle \overset{\Psi_I}{\longrightarrow} \widetilde{H}_{\lfloor \frac{r-1}{2} \rfloor}((K_{B_{n}})_{I}) \overset{\ast}{\longrightarrow} \widetilde{H}^{\lfloor \frac{r-1}{2} \rfloor}((K_{B_{n}})_{I}),
\end{equation}
	where $\pi^{-1}(x + M_I) = x$ for each $x \in \fS^B_I$ and $\ast$ is the canonical isomorphism to the dual space.
	In addition, Lemma~\ref{basis_B} confirms the surjectivity of this composition.
Referring to \eqref{typeB_I}, to complete the proof of Theorem~\ref{theorem_in4}, it remains to show that every signed permutation on $I$ can be written as a linear combination of $B$-snakes in $\Q\left\langle \fS^B_I \right\rangle /M_I$.

\begin{theorem}~\label{theorem_in4}
    For a subset $I \subset [n]$ of cardinality $r$,
    $$
    \Q\left\langle \fA^B_I \right\rangle \overset{\hookrightarrow}{\cong} \Q\left\langle \fS^B_I \right\rangle / M_I \overset{\Psi_I \circ \pi^{-1}}{\cong} \widetilde{H}^{\lfloor \frac{r-1}{2} \rfloor}((K_{B_{n}})_{I})
    $$
    as a $\Q$-vector space.
\end{theorem}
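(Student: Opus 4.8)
The plan is to collapse the whole statement to a single \emph{spanning} claim and then settle that claim by a straightening argument governed by the partial order $\trianglelefteq$. First I would record what is already free. The composition in \eqref{map} is well defined by Theorem~\ref{kernel}, and by Lemma~\ref{basis_B} it sends the classes $\{[\alpha] : \alpha \in \fA^B_I\}$ onto a $\Q$-basis of $\widetilde{H}^{\lfloor (r-1)/2 \rfloor}((K_{B_n})_I)$; hence this composition is surjective, and the $[\alpha]$ are automatically $\Q$-linearly independent in $\Q\langle \fS^B_I \rangle / M_I$ (a dependence among them would push forward to a dependence among basis vectors). So injectivity of $\Q\langle \fA^B_I \rangle \hookrightarrow \Q\langle \fS^B_I \rangle / M_I$ needs no work, and the only thing left is that the $B$-snakes \emph{span} $\Q\langle \fS^B_I \rangle / M_I$. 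Granting that, the $[\alpha]$ form a basis, which is the first isomorphism, and then
$$
\dim_\Q \Q\langle \fS^B_I \rangle / M_I = \left\vert \fA^B_I \right\vert = \dim_\Q \widetilde{H}^{\lfloor (r-1)/2 \rfloor}((K_{B_n})_I)
$$
by \eqref{typeB_I} together with the fact that $B$-snakes are counted by Springer numbers; a surjection of equidimensional finite-dimensional spaces is an isomorphism, giving the second isomorphism.

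For the spanning claim I would prove the sharp form needed to induct: for every non-snake $x \in \fS^B_I$,
\begin{equation*}
x \equiv \sum_{\substack{y \in \fS^B_I \setminus \fA^B_I \\ x \vartriangleleft y}} c_y\, y \pmod{M_I + \Span_\Q \fA^B_I},
\end{equation*}
i.e.\ modulo the relations and the $B$-snakes, $x$ is a combination of signed permutations strictly larger in $\trianglelefteq$. Since $(\fS^B_I,\trianglelefteq)$ is a finite poset, this rule yields spanning by downward Noetherian induction: a $\trianglelefteq$-maximal non-snake has an empty right-hand sum and so already lies in $M_I + \Span_\Q \fA^B_I$, while every other non-snake is reduced to strictly larger elements handled by the inductive hypothesis. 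To manufacture the displayed identity I would locate the earliest place where the snake inequalities $0 < x_r > x_{r-1} < x_{r-2} > \cdots$ fail and select the matching generator of $M_I$: a wrong sign or internal order in the leading singleton or pair is repaired by $\cH_3$, $\cH_4$ (and $\cH_1$), a broken alternation across a block boundary by the six-term relation $\cH_2^i$, and a failure involving the leading singleton together with the first pair (odd $r$) by the twelve-term relation $\cH_5$ --- exactly as in Example~\ref{ex_K2} for $r=4$ and Example~\ref{example:K_3_I} for $r=3$, where $[1/23]$ is rewritten purely in $B$-snakes. In each case $x$ occurs as one term of the chosen $\cH_j(\,\cdot\,) \in M_I$, and solving $\cH_j = 0$ for $x$ expresses it through the remaining terms.

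The main obstacle is precisely the verification that, after this choice, every remaining \emph{non-snake} term is strictly $\trianglelefteq$-larger than $x$. This is a monotonicity statement about the order: one must check that the chosen relation, applied at the first failure index $i$, moves the relevant partial sum $x_{2i-1}+x_{2i+1}$ (or its barred version $\bar{x}_{2i-1}+\bar{x}_{2i+1}$ when $r$ is even) in the direction required by $\vartriangleleft$, while leaving all earlier sums unchanged, so that genuinely non-snake terms rise in $\trianglelefteq$ and every other term is already a $B$-snake. Arranging the case analysis so that the relation applied at position $i$ does not disturb the positions already corrected below $i$, and confirming that each produced term is either a $B$-snake or strictly larger, is the delicate bookkeeping that $\trianglelefteq$ is built to make tractable; once it is in place the downward induction gives spanning, and the dimension count then upgrades both maps to isomorphisms.
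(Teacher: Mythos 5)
Your reduction is the same as the paper's: you correctly observe that well-definedness is Theorem~\ref{kernel}, that surjectivity and the linear independence of the classes $[\alpha]$, $\alpha\in\fA^B_I$, follow from Lemma~\ref{basis_B}, and that everything therefore hinges on the single claim that the $B$-snakes span $\Q\langle\fS^B_I\rangle/M_I$, which you (like the paper) attack by straightening with the generators of $M_I$ and terminating the recursion via the finite poset $(\fS^B_I,\trianglelefteq)$ and the count $\lvert\fA^B_I\rvert=b_r=\dim\widetilde H^{\lfloor(r-1)/2\rfloor}((K_{B_n})_I)$. So the skeleton is sound and matches the paper.

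The gap is in the one substantive claim you make, namely the displayed congruence asserting that every surviving non-snake term is strictly $\trianglelefteq$-\emph{larger} than $x$. The paper's proof asserts the opposite: applying $\cH_2$, $\cH_4$, $\cH_5$ to a non-snake $x$ expresses $x+M_I$ through terms $y$ with $y\vartriangleleft x$, and runs the recursion downward. A concrete check with the paper's stated order bears this out. Take $I=\{1,2,3,4\}$ and the non-snake $x=[\bar4\bar3/\bar2\bar1]$; the relation $\cH_2^1(x)$ (Example~\ref{ex_K2}) rewrites $x$ using, among others, the non-snake $y=[\bar4\bar2/\bar3\bar1]$. Since $r$ is even one compares $\bar x=[43/21]$ with $\bar y=[42/31]$: $\bar x_1+\bar x_3=1+3=4$ while $\bar y_1+\bar y_3=1+2=3<4$, so $\bar y\prec\bar x$ and hence $y\vartriangleleft x$ — strictly \emph{smaller}, contradicting your congruence. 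Consequently your downward induction from $\trianglelefteq$-maximal non-snakes does not close: the base case ("maximal non-snakes have empty right-hand sum") is not what the relations deliver. Beyond the sign, the actual content of the theorem — checking that \emph{each} of the three families $\cH_2$, $\cH_4$, $\cH_5$ (after the $\cH_1,\cH_3$ normalization) moves every non-snake term consistently in one direction of $\trianglelefteq$, so that the recursion cannot cycle — is exactly the step you defer as "delicate bookkeeping"; since that is the only nontrivial point of the proof and the direction you commit to is contradicted by the example above, the proposal as written does not establish the spanning claim.
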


\begin{proof}
Let us consider the generator sets $\cH_1$ and $\cH_3$ of $M_I$, defined in Definition~\ref{def M_I}.
Then, we may assume that each signed permutation $x = x_r\cdots x_2x_1$ in $\Q\left\langle \fS^B_I \right\rangle /M_I$ satisfies the following conditions:
	$$
	\begin{cases}
		x_r>0 \text{ and }x_{2i-1} < x_{2i}, &\mbox{if $r$ is odd},\\
		x_{2i-1} > x_{2i}, &\mbox{if $r$ is even},
	\end{cases}
	$$
	for each $1 \leq i \leq \lfloor \frac{r}{2} \rfloor$.
    
    For a signed permutation $x = x_r\cdots x_2 x_1 \in \Q\left\langle \fS^B_I \right\rangle /M_I$, if $x$ is not a $B$-snake, then at least one of the following conditions must hold:
    \begin{enumerate}
      \item There is $1 \leq i < \lfloor \frac{r}{2} \rfloor$ such that $x_{2i+2},x_{2i+1},x_{2i},x_{2i-1}$ are monotone.
      \item $r$ is even and $x_r < 0$.
      \item $r \geq 3$ is odd and $x_r < x_{r-1}$.
    \end{enumerate}
    If any of these conditions holds for $x$, then by the relation $\cH_2$, $\cH_4$, and $\cH_5$ in Definition~\ref{def M_I}, one can verify that $x + M_I$ can be expressed by a linear sum of $(y +M_I)$s satisfying $y \triangleleft x$, where $y \in \fS^B_I$.
    This procedure is carried out recursively and must terminate in finitely many steps, which proves the theorem.
    
\end{proof}

From Theorem~\ref{theorem_in4}, every element $x \in \fS^B_I$ is written as $\alpha \in \fA_I^B$ within $\Q\left\langle \fS^B_I  \right\rangle/ M_I$.
We denote by $\cC^{\alpha}_x$ the coefficient of $\alpha \in \fA^B_I$ in the expression of~$x$, as
$$
	x = \sum_{\alpha \in \fA^B_I}\cC^{\alpha}_x\cdot\alpha.
$$
After conducting calculations for small positive integers~$r$, the author was able to confirm that every $\cC^\alpha_x$ is $-1$, $0$, or $1$.
However, it is unclear whether nonzero $\cC^\alpha_x$ always takes $\pm 1$.

\section{Cohomology rings of $X^\R_{B_n}$}\label{Sec:multi}
 
Let $I_1$ and $I_2$ be subsets of $[n]$.
We assume that $I_1 \cap I_2 = \emptyset$ and $\left\vert I_1 \right\vert \cdot \left\vert I_2 \right\vert$ is even.
A $B$-snake $z = z_{\ell}\cdots z_2z_1 \in \fA^B_{I_1 \cup I_2}$ is said to be \emph{restrictable} to $(I_1,I_2)$, if for all $1 \leq i \leq \lfloor \frac{\ell-1}{2} \rfloor$, either
$$
\{\left\vert z_{2i-1} \right\vert,\left\vert z_{2i} \right\vert\} \subset I_1  \text{ or } \{\left\vert z_{2i-1} \right\vert,\left\vert z_{2i} \right\vert\} \subset I_2.
$$
The set of restrictable $B$-snakes to $(I_1,I_2)$ is denoted by $\cR^B_{(I_1,I_2)}$.
For each $z = z_\ell\cdots z_1 \in \cR^B_{(I_1,I_2)}$, let $\kappa_{(I_1,I_2)}(z)$ denote the number of pairs $(z_{2i-1},z_{2j-1})$ satisfying
	$$
	\left\vert z_{2i-1} \right\vert \in I_1, \left\vert z_{2j-1} \right\vert \in I_2,  \text{ and } \lfloor \frac{\ell+1}{2} \rfloor \geq i > j \geq 1. 
	$$

Let $I$ be a subset of $[n]$, and $x$ a signed permutation on $I$.
For each $J \subset I$, define a signed permutation $P_J(x) \in \fS^B_J$ as follows:
	$$
    P_J(x) =
	\begin{cases}
		\rho_J(x), & \mbox{if $\left\vert I \right\vert + \left\vert J \right\vert$ is even}, \\
		\rho_J(\bar{x}), & \mbox{if $\left\vert I \right\vert + \left\vert J \right\vert$ is odd},
	\end{cases}
	$$
where $\bar{x}$ is defined in~\eqref{barx}, and $\rho_J(x) \in \fS_J^B$ is a subpermutation of $x$ restricted to $J$.

\begin{example}\label{example:5.1}
Let us consider subsets $I_1 = \{1,4\}$ and $I_2 = \{2,3\}$ of $I = \{1,2,3,4\}$.
The $B$-snake~$[1\bar{3}/{4}2]$ is not restrictable to $(I_1,I_2)$.
On the other hand, $B$-snakes $[1\bar{4}/32]$ and $[1\bar{4}/\bar{2}\bar{3}]$ are restrictable to $(I_1,I_2)$.
Moreover, we have
	$$
	P_{I_1}([1\bar{4}/32]) = P_{I_1}([1\bar{4}/\bar{2}\bar{3}]) = [1\bar{4}], P_{I_2}([1\bar{4}/32]) = [32], \text{ and } P_{I_2}([1\bar{4}/\bar{2}\bar{3}]) = [\bar{2}\bar{3}].
	$$
Considering that $2,3 \in I_2$ and $4 \in I_1$, it follows that
$$
\kappa_{(I_1,I_2)}([1\bar{4}/32]) = \kappa_{(I_1,I_2)}([1\bar{4}/\bar{2}\bar{3}]) = 1.
$$
\end{example}

\begin{example}\label{example:5.2}
Consider subsets $I_1 = \{1,5\}$ and $I_2 = \{2,3,4\}$ of $I = \{1,2,3,4,5\}$.
We have that $[2/\bar{1}3/\bar{4}5] \notin \cR^B_{(I_1,I_2)}$ and $z = [z_5/z_4z_3/z_2z_1] = [2/15/\bar{4}\bar{3}] \in \cR^B_{(I_1,I_2)}$.
	We obtain
	$$
	P_{I_1}(z) = [\bar{1}\bar{5}], P_{I_2}(z) = [2/\bar{4}\bar{3}].
	$$
Note that
\begin{enumerate}
  \item $\left\vert z_3 \right\vert = 5 \in I_1$, $\left\vert z_1 \right\vert =3 \in I_2$, 
  \item $\left\vert z_5 \right\vert = 2 \in I_2$, $\left\vert z_1 \right\vert =3 \in I_2$, and
  \item $\left\vert z_5 \right\vert = 2 \in I_2$, $\left\vert z_3 \right\vert =5 \in I_1$.
\end{enumerate}
Thus, $\kappa_{(I_1,I_2)}(z) = \left\vert \{(z_{2i-1},z_{2j-1}) \colon \left\vert z_{2i-1} \right\vert \in I_2, \left\vert z_{2j-1} \right\vert \in I_1, 1 \leq i < j \leq 3\}\right\vert = \left\vert \{(5,-3)\}\right\vert = 1$.
\end{example}

We now denote $(I_1 \cup I_2) \setminus (I_1 \cap I_2)$ by $I_1 \triangle I_2$.

\begin{definition}\label{def-multi2}
	Let  $\smile \colon \Q\left\langle \fA^B_{I_1}  \right\rangle \otimes \Q\left\langle \fA^B_{I_2}  \right\rangle \to \Q\left\langle \fA^B_{I_1 \triangle I_2}  \right\rangle$ be the multiplicative structure of the $\Q$-vector algebra
$$
\bigoplus_{I \subset [n]} \Q\left\langle \fA^B_{I}  \right\rangle
$$
defined by
	$$
	\alpha \smile \beta = \begin{cases}
		\underset{z \in \cR^B_{(I_1,I_2)}}{\sum}(-1)^{\kappa_{(I_1,I_2)}(z)} \cC^\alpha_{P_{I_1}(z)} \cC^\beta_{P_{I_2}(z)} \cdot z, & \mbox{if } I_1 \cap I_2 = \emptyset, \left\vert I_1 \right\vert \cdot \left\vert I_2 \right\vert \text{ is even},\\
		0, & \mbox{otherwise,}
	\end{cases}
	$$
for $\alpha \in \fA^B_{I_1}$ and $\beta \in \fA^B_{I_2}$.
	
\end{definition}

\begin{example}
    Let $I_1 = \{1,4\}$, $I_2 = \{2,3\}$ be subsets of $\{1,2,3,4\}$.
    We consider the multiplications $[1\bar{4}] \smile [32]$ and $[41] \smile [3\bar{2}]$.
    There are exactly $20$~restrictable $B$-snakes to $(I_1,I_2)$.
    Refer Table~\ref{table1}.
    \begin{table}
    \renewcommand{\arraystretch}{1.5} 
      \centering
      \begin{tabular}{c|c|c|c|c}
            \hline
		\diagbox{$P_{I_1}(z)$}{$P_{I_2}(z)$} & $ [32]$ & $[3\bar{2}]$ & $[2\bar{3}]$ & $[\bar{2}\bar{3}]$ \\
\hline
		$[41]$  & $[41/32],[32/41]$ & $[41/3\bar{2}],[3\bar{2}/41]$ & $[41/2\bar{3}],[2\bar{3}/41]$ &  \\
\hline
		$[4\bar{1}]$  & $[4\bar{1}/32],[32/4\bar{1}]$ & $[4\bar{1}/3\bar{2}],[3\bar{2}/4\bar{1}]$ & $[4\bar{1}/2\bar{3}],[2\bar{3}/4\bar{1}]$ &\\
\hline
		$[1\bar{4}]$ & $[1\bar{4}/32]$ & $[1\bar{4}/3\bar{2}],[3\bar{2}/1\bar{4}]$ & $[1\bar{4}/2\bar{3}],[2\bar{3}/1\bar{4}]$& $[1\bar{4}/\bar{2}\bar{3}]$ \\
\hline
		$[\bar{1}\bar{4}]$ &  & $[3\bar{2}/\bar{1}\bar{4}]$ & $[2\bar{3}/\bar{1}\bar{4}]$ &\\
\hline
	\end{tabular}
      \caption{The restrictable $B$-snakes}\label{table1}
    \end{table}
      We consider $z \in \cR^B_{(I_1,I_2)}$ such that neither $\cC^{[1\bar{4}]}_{P_{I_1}(z)}$ nor $\cC^{[32]}_{P_{I_2}(z)}$ is zero.
      Note that, for each pair~$(x,y)$ of $B$-snakes on a finite set, the coefficient $\cC^y_x$ is the kronecker delta~$\delta_{x,y}$.
      Then we obtain that either $P_{I_1}(z)$ is $[1\bar{4}]$ or $[\bar{1}\bar{4}]$, and $P_{I_2}(z)$ is $[32]$ or $[\bar{2}\bar{3}]$.
      Hence, 
      \begin{equation}\label{ex54}
        z = [1\bar{4}/32], \text{ or } [1\bar{4}/\bar{2}\bar{3}].
      \end{equation}
      By Definition~\ref{def M_I},
      $$
      [\bar{2}\bar{3}] + M_{I_2} = [32] - [3\bar{2}] + [2\bar{3}] + M_{I_2}.
      $$
      Then we have $\cC^{[32]}_{[\bar{2}\bar{3}]} =1$.
      Combining Example~\ref{example:5.1} with \eqref{ex54}, we conclude that
      \begin{align*}
        [1\bar{4}] \smile [32] & =(-1)^{\kappa_{(I_1,I_2)}([1\bar{4}/32])}\cdot[1\bar{4}/32] + (-1)^{\kappa_{(I_1,I_2)}([1\bar{4}/\bar{2}\bar{3}])}\cdot[1\bar{4}/\bar{2}\bar{3}]. \\
         & = -[1\bar{4}/32] - [1\bar{4}/\bar{2}\bar{3}].
      \end{align*}
      
      Similarly, for $z \in \cR_{(I_1,I_2)}^B$, if $\cC^{[41]}_{P_{I_1}(z)}\cC^{[3\bar{2}]}_{P_{I_2}(z)} \neq 0$, it follows that $z = [51/3\bar{2}]$, $[3\bar{2}/41]$ or $[3\bar{2}/\bar{1}\bar{4}]$.
      One can easily deduce that
      $$
      [41] \smile [3\bar{2}] = -[41/3\bar{2}] +[3\bar{2}/41] + [3\bar{2}/\bar{1}\bar{4}].
      $$
      
\end{example}

\begin{example}
Let $I_1 = \{1,5\}$ and $I_2 = \{2,3,4\}$ be subsets of $\{1,2,3,4,5\}$.
To compute the multiplication $[51] \smile [2/\bar{4}\bar{3}]$ of $[51] \in \fA^B_{I_1}$ and $[2/\bar{4}\bar{3}] \in \fA^B_{I_2}$,
we consider only the restrictable $B$-snakes $z \in \cR_{(I_1,I_2)}^B$ that satisfy the following two conditions:
  \begin{enumerate}
    \item $P_{I_1}(z) = [51]$ or $[\bar{1}\bar{5}]$.
    \item $P_{I_2}(z) = [2/\bar{4}\bar{3}]$ or $[2/34]$.
  \end{enumerate}
  By some calculations, we conclude that
$$
    [51] \smile [2/\bar{4}\bar{3}] =  -[2/15/\bar{4}\bar{3}] -[2/\bar{5}\bar{1}/\bar{4}\bar{3}]- [2/15/34]+ [2/\bar{4}\bar{3}/\bar{5}\bar{1}].
$$
\end{example}

We now focus on the multiplicative structure of $H^\ast(X^{\R}_{B_n};\Q)$.
Combining \eqref{map} with Theorem~\ref{ChoiPark}, we obtain an extended $\Q$-vector space isomorphism
$$
	\bigoplus_{I \subset [n]} \Q\left\langle \fS^B_I  \right\rangle/M_I \to H^\ast(X^{\R}_{B_n};\Q).
$$
By Theorem~\ref{kernel}, the above map can be expressed as
$$
\boldsymbol{\Psi} \colon \bigoplus_{I \subset [n]} \Q\left\langle \fA^B_I  \right\rangle \to H^\ast(X^{\R}_{B_n};\Q).
$$
Here, we aim to demonstrate that the $\Q$-vector space isomorphism $\boldsymbol{\Psi}$ is also a $\Q$-algebra isomorphism.

We assume that $I_1$ and $I_2$ are subsets of $[n]$ of cardinality $r$ and $s$, respectively.
Consider the homomorphism
$$
	\iota_{\ast} \colon \widetilde{H}_{\lfloor \frac{r+1}{2} \rfloor + \lfloor \frac{s+1}{2} \rfloor -1}((K_{B_n})_{I_1 \triangle I_2}) \to \widetilde{H}_{\lfloor \frac{r+1}{2} \rfloor + \lfloor \frac{s+1}{2} \rfloor -1}((K_{B_n})_{I_1} \star (K_{B_n})_{I_2}),
$$
induced by the simplicial map $\iota \colon (K_{B_n})_{I_1 \triangle I_2} \hookrightarrow (K_{B_n})_{I_1} \star (K_{B_n})_{I_2}$.

\begin{lemma}\label{lemma2}
If $I_1 \cap I_2 = \emptyset$ and $\left\vert I_1 \right\vert \cdot \left\vert I_2 \right\vert$ is even, then
	$$
	\iota_{\ast}(\Psi_{I_1 \cup I_2}(z)) =
	\begin{cases}
		(-1)^{\kappa_{(I_1,I_2)}(z)}\Psi_{I_1}(P_{I_1}(z))\star\Psi_{I_2}(P_{I_2}(z)), & \mbox{for }z \in \cR^B_{(I_1,I_2)}, \\
		0, & \mbox{for }z \notin \cR^B_{(I_1,I_2)}.
	\end{cases}
	$$
\end{lemma}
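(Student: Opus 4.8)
The plan is to compute $\iota_\ast$ directly on the spherical cycle $\square_z$ representing $\Psi_{I_1\cup I_2}(z)$ (recall $I_1\triangle I_2=I_1\cup I_2$ since $I_1\cap I_2=\emptyset$), using Proposition~\ref{typeB_homotopy} to replace every complex $(K_{B_n})_\bullet$ by its deformation retract $\widehat{(K_{B_n})_\bullet}$ (a join of homotopy equivalences is again one at the level of $\Q$-homology via the join/K\"unneth formula). Under this reduction $\iota$ becomes the vertex map sending $L$ to $L\cap(I_1\cup\{-i\colon i\in I_1\})$ in the first factor exactly when $\#\{i\in I_1\colon \pm i\in L\}$ is odd, and to $L\cap(I_2\cup\{-i\colon i\in I_2\})$ in the second factor otherwise (the two parities being complementary on $(K_{B_n})_{I_1\cup I_2}$). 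First I would record the elementary fact that, for each level $i$, the two vertices $\cF_i(z),\cF_i(z^\ast)$ of the $i$-th $S^0$-factor of $\square_z$ differ only by the transposition $z_{2i-1}\leftrightarrow z_{2i}$ (resp.\ the sign change $z_\ell\leftrightarrow\bar z_\ell$ on the top level when $\ell$ is odd), and that the $I_1$-parity of $\cF_i(z)$ is congruent to $[\,|z_{2i-1}|\in I_1\,]$, because every complete block below level $i$ contributes evenly to each $I_j$-count.

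For $z\notin\cR^B_{(I_1,I_2)}$, let $i_0$ be the smallest level whose block is split, say $|z_{2i_0-1}|\in I_1$ and $|z_{2i_0}|\in I_2$. By the parity computation, $\cF_{i_0}(z)$ lands in the first factor while $\cF_{i_0}(z^\ast)$ lands in the second, so level $i_0$ contributes a single vertex $p=\iota(\cF_{i_0}(z))$ to the first factor. Since $p$ is joined in $\square_z$ to every vertex coming from a level other than $i_0$, its image is joined to all remaining vertices of the first-factor shadow $X'\subset\widehat{(K_{B_n})_{I_1}}$; hence $X'$ is a cone with apex $p$ and is contractible. Consequently $\iota(\square_z)$ lies in a join $X'\star Y'$ with $X'$ contractible, so $\widetilde H_\ast(X'\star Y')=0$ and $\iota_\ast\Psi_{I_1\cup I_2}(z)=0$.

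For $z\in\cR^B_{(I_1,I_2)}$ every block is pure, and I would partition the levels into the set $A_1$ with $|z_{2i-1}|\in I_1$ and the set $A_2$ with $|z_{2i-1}|\in I_2$ (the top level, when $\ell$ is odd, being automatically pure). For $i\in A_1$ both $\cF_i(z)$ and $\cF_i(z^\ast)$ map into the first factor, and their images $\{z_j\colon j\le 2i-1,\ |z_j|\in I_1\}$ and $\{z_j\colon j\le 2i-2,\ |z_j|\in I_1\}\cup\{z_{2i}\}$ are the two vertices of the corresponding $S^0$-factor of $\square_{P_{I_1}(z)}$; the analogous statement holds for $A_2$ and $\square_{P_{I_2}(z)}$. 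The point of the twist $\rho_J(x)$ versus $\rho_J(\bar x)$ in the definition of $P_J$ is precisely to reconcile the parity conventions in $\cF_\bullet$ and $(\,\cdot\,)^\ast$ on the sub-snake with those on $z$, so that these identifications respect both the labelling and the orientation of each $S^0$-factor. Granting this, $\iota_\#(\square_z)$ equals, up to sign, the join of the sub-cross-polytope on the $A_1$-levels with the one on the $A_2$-levels.

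It then remains to reorder the join factors from their interleaved order $1,2,\dots,k$ into the order ``$A_1$-levels, then $A_2$-levels'' demanded by $\Psi_{I_1}(P_{I_1}(z))\star\Psi_{I_2}(P_{I_2}(z))$. Each interchange of two adjacent $S^0$-factors is an anticommutation, contributing the sign $(-1)^{(0+1)(0+1)}=-1$, so the total sign is $(-1)^{m}$ with $m$ the number of inversions, namely the number of pairs of levels $i>j$ with $|z_{2i-1}|\in I_1$ and $|z_{2j-1}|\in I_2$; this is exactly $\kappa_{(I_1,I_2)}(z)$, yielding the stated formula. I expect the genuine work to be the orientation bookkeeping of the previous paragraph: verifying that the two $S^0$-identifications carry the fixed orientation of $\square_z$ to the fixed orientations of $\square_{P_{I_1}(z)}$ and $\square_{P_{I_2}(z)}$ with no stray sign, which is where the even/odd cardinality conventions and the definition of $(\,\cdot\,)^\ast$ must be tracked with care.
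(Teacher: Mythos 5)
Your proposal is correct and follows essentially the same route as the paper: for non-restrictable $z$ the paper exhibits $\iota(\square_z)$ as an explicit boundary $\partial(\square_{\hat z}\star\{\{\cF_p(z),\cF_p(z^\ast)\}\})$ because the split level's $S^0$ becomes an edge in the join (your cone/contractibility packaging of the same observation is equally valid), and for restrictable $z$ it likewise identifies the level-$i$ $S^0$-factors with those of $\square_{P_{I_1}(z)}$ and $\square_{P_{I_2}(z)}$ and obtains $(-1)^{\kappa_{(I_1,I_2)}(z)}$ from reordering the join factors. The orientation bookkeeping you flag as the remaining work is also left implicit in the paper's proof, so there is no substantive divergence.
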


\begin{proof}
	To begin, let us consider the case where $z \notin \cR^B_{(I_1,I_2)}$.
There exists $1 \leq p \leq \lfloor \frac{r+s}{2} \rfloor$ such that neither $I_1$ nor $I_2$ includes $\{\left\vert z_{2p-1} \right\vert, \left\vert z_{2p} \right\vert\}$.
We choose $p$ to be the largest possible value.
Then, $\{\cF_{p}(z),\cF_{p}(z^\ast)\}$ is a line segment in $(K_{B_n})_{I_1} \star (K_{B_n})_{I_2}$.
Consider the subpermutation
$$
\hat{z} \in \fS^B_{(I_1\cup I_2) \setminus \{\left\vert z_{2p-1} \right\vert,\left\vert z_{2p} \right\vert\}}
$$
of $z$.
It follows that
$$
\Psi_{I_1 \cup I_2}(z) = [\pm \square_{\hat{z}} \star \{\cF_{p}(z),\cF_{p}(z^\ast)\}]= [\pm \partial( \square_{\hat{z}} \star \{\{\cF_{p}(z),\cF_{p}(z^\ast)\}\})] = 0.
$$

We now assume that $z = z_{r+s}\cdots z_2z_1 \in \cR^B_{(I_1,I_2)}$.
Let $i_1 < \cdots < i_{\lfloor \frac{r-1}{2} \rfloor}$ be the indices such that $z_{2i_k-1} \in I_1$ for $1 \leq k \leq {\lfloor \frac{r-1}{2} \rfloor}$, and $j_1 < \cdots < j_{\lfloor \frac{s-1}{2} \rfloor}$ be the indices such that $z_{2j_\ell-1} \in I_2$ for $1 \leq \ell \leq {\lfloor \frac{s-1}{2} \rfloor}$.
Then, we have
$$
\begin{cases}
   \Psi_{I_1}(P_{I_1}(z)) = [\{\cF_{i_1}(z),\cF_{i_1}(z^\ast)\}\star \cdots \star \{\cF_{i_{\lfloor \frac{r-1}{2} \rfloor}}(z),\cF_{i_{\lfloor \frac{r-1}{2} \rfloor}}(z^\ast)\}] \in H_{\lfloor \frac{r-1}{2} \rfloor}((K_{B_n})_{I_1}), \\
  \Psi_{I_2}(P_{I_2}(z)) = [\{\cF_{j_1}(z),\cF_{j_1}(z^\ast)\}\star \cdots \star \{\cF_{j_{\lfloor \frac{s-1}{2} \rfloor}}(z),\cF_{j_{\lfloor \frac{s-1}{2} \rfloor}}(z^\ast)\}] \in H_{\lfloor \frac{s-1}{2} \rfloor}((K_{B_n})_{I_2}).
\end{cases}
$$
Therefore, $\iota_{\ast}(\Psi_{I_1 \cup I_2}(z)) = (-1)^{\kappa_{(I_1,I_2)}(z)}\Psi_{I_1}(P_{I_1}(z)) \star \Psi_{I_2}(P_{I_2}(z))$.
\end{proof}

\begin{theorem}\label{thm:final}
    A $\Q$-linear map
    $$
    \boldsymbol{\Psi} \colon (\bigoplus_{I \subset [n]} \Q\left\langle \fA^B_I  \right\rangle ,+,\smile) \to H^\ast(X^{\R}_{B_n};\Q).
    $$
    is a $\Q$-algebra isomorphism.
\end{theorem}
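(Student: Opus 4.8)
The plan is as follows. By the construction preceding the statement, $\boldsymbol{\Psi}$ is already known to be a $\Q$-vector space isomorphism: it is assembled from the isomorphisms of Theorem~\ref{theorem_in4} over all $I \subset [n]$, combined with Theorem~\ref{ChoiPark}. Hence it remains only to verify that $\boldsymbol{\Psi}$ is multiplicative, that is,
$$
\boldsymbol{\Psi}(\alpha \smile \beta) = \boldsymbol{\Psi}(\alpha) \cup \boldsymbol{\Psi}(\beta)
$$
for all $\alpha \in \fA^B_{I_1}$ and $\beta \in \fA^B_{I_2}$; since both operations are bilinear and the $B$-snakes form a basis of $\bigoplus_{I} \Q\left\langle \fA^B_I \right\rangle$, this suffices. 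I would fix such $\alpha,\beta$, write $r = \left\vert I_1 \right\vert$ and $s = \left\vert I_2 \right\vert$, and compute the right-hand side using the explicit description of the cohomology product in Theorem~\ref{ChoiPark}.

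First I would dispose of the degenerate cases, where $\smile$ is defined to vanish. By~\eqref{typeB_I} together with the Cohen--Macaulay (bouquet-of-spheres) property, the reduced cohomology of $(K_{B_n})_J$ is concentrated in the single degree $\lfloor \frac{\left\vert J \right\vert -1}{2}\rfloor$. The product $\boldsymbol{\Psi}(\alpha)\cup\boldsymbol{\Psi}(\beta)$ lands in $\widetilde{H}^{\lfloor \frac{r-1}{2}\rfloor + \lfloor \frac{s-1}{2}\rfloor + 1}((K_{B_n})_{I_1 \triangle I_2})$, and a direct parity count shows that this degree strictly exceeds $\lfloor \frac{\left\vert I_1 \triangle I_2 \right\vert -1}{2}\rfloor$ whenever $I_1 \cap I_2 \neq \emptyset$, as well as whenever $I_1 \cap I_2 = \emptyset$ but $r$ and $s$ are both odd. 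In each such case the target group is zero, so the cup product vanishes, matching the definition of $\smile$.

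For the main case $I_1 \cap I_2 = \emptyset$ with $rs$ even, I would test both sides against the homology basis $\{[\square_w] \colon w \in \fA^B_{I_1 \cup I_2}\}$ supplied by Lemma~\ref{basis_B}. Writing the cup product via Theorem~\ref{ChoiPark} as $\boldsymbol{\Psi}(\alpha)\cup\boldsymbol{\Psi}(\beta) = \iota^\ast\big(\boldsymbol{\Psi}(\alpha)\star\boldsymbol{\Psi}(\beta)\big)$ and using the adjunction $\langle \iota^\ast(-),[\square_w]\rangle = \langle -,\iota_\ast[\square_w]\rangle$, I would apply Lemma~\ref{lemma2} to evaluate $\iota_\ast[\square_w] = \iota_\ast(\Psi_{I_1 \cup I_2}(w))$: it is $0$ for $w \notin \cR^B_{(I_1,I_2)}$ and equals $(-1)^{\kappa_{(I_1,I_2)}(w)}\,\Psi_{I_1}(P_{I_1}(w))\star\Psi_{I_2}(P_{I_2}(w))$ otherwise. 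Factoring the join pairing through Künneth, and recalling that $\boldsymbol{\Psi}(\alpha),\boldsymbol{\Psi}(\beta)$ are the dual basis vectors while $\Psi_{I_k}(P_{I_k}(w)) = \sum_{\gamma} \cC^\gamma_{P_{I_k}(w)}[\square_\gamma]$ expresses $P_{I_k}(w)$ in the $B$-snake basis (Theorem~\ref{theorem_in4}), one obtains
$$
\big\langle \boldsymbol{\Psi}(\alpha)\cup\boldsymbol{\Psi}(\beta),\, [\square_w]\big\rangle = (-1)^{\kappa_{(I_1,I_2)}(w)}\,\cC^\alpha_{P_{I_1}(w)}\,\cC^\beta_{P_{I_2}(w)}
$$
for $w \in \cR^B_{(I_1,I_2)}$, and $0$ otherwise. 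This is precisely the coefficient of $w$ in $\alpha \smile \beta$ from Definition~\ref{def-multi2}, so the two classes pair identically with every basis element and therefore coincide.

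The main obstacle I anticipate is the careful bookkeeping of signs in the join/Künneth pairing: the reduced join carries suspension-type sign conventions, and one must check that the orientation built into each $\square_x$ (the fixed orientation of the cross-polytope boundary) is consistent across $\Psi_{I_1}$, $\Psi_{I_2}$, and $\Psi_{I_1 \cup I_2}$, so that the only surviving sign is the combinatorial factor $(-1)^{\kappa_{(I_1,I_2)}(w)}$ already recorded in Lemma~\ref{lemma2}. In particular, confirming that the identification of $\boldsymbol{\Psi}(\alpha)$ with the dual basis vector is compatible with the join pairing, and that no extraneous sign is introduced, is the delicate point; once these conventions are pinned down, the computation above becomes routine.
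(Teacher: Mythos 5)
Your proposal is correct and follows essentially the same route as the paper: dispose of the degenerate cases by a degree count showing the target reduced cohomology group vanishes, and in the main case evaluate both sides against homology classes of the form $\Psi_{I_1\cup I_2}(z)$ using Lemma~\ref{lemma2} and the adjunction with $\iota_\ast$ to recover the coefficient $(-1)^{\kappa_{(I_1,I_2)}(z)}\cC^\alpha_{P_{I_1}(z)}\cC^\beta_{P_{I_2}(z)}$. The only cosmetic difference is that you test against the $B$-snake basis $\{[\square_w]\}$ while the paper tests against $\Psi_{I_1\triangle I_2}(z)$ for all signed permutations $z$, which is immaterial.
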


\begin{proof}
	It is sufficient to prove that
\begin{equation}\label{5.1}
(\boldsymbol{\Psi}(\alpha) \smile \boldsymbol{\Psi}(\beta))(\Psi_{I_1 \triangle I_2}(z)) = \boldsymbol{\Psi}(\alpha \smile \beta)(\Psi_{I_1 \triangle I_2}(z))
\end{equation}
for each $\alpha \in \fA^B_{I_1},$
	$\beta \in \fA^B_{I_2}$, and $z \in \fS^B_{I_1 \triangle I_2}$.

If $I_1$ and $I_2$ satisfy the following two conditions:
\begin{enumerate}
  \item $I_1 \cap I_2 \neq \emptyset$,
  \item $I_1 \cap I_2 = \emptyset$ and $\left\vert I_1 \right\vert \cdot \left\vert I_2 \right\vert$ is odd.
\end{enumerate}
Then $(K_{B_n})_{I_1 \triangle I_2}$ is homotopy equivalent to a bouquet of spheres with dimension at most $\lfloor \frac{r+1}{2} \rfloor + \lfloor \frac{s+1}{2} \rfloor -1$.
It follows that
$$
\widetilde{H}^{\lfloor \frac{r+1}{2} \rfloor + \lfloor \frac{s+1}{2} \rfloor -1}((K_{B_n})_{I_1 \triangle I_2}) =0.
$$
In conclusion, the following map is trivial:
$$
	\widetilde{H}^{\lfloor \frac{r-1}{2} \rfloor}((K_{B_n})_{I_1}) \otimes \widetilde{H}^{\lfloor \frac{s-1}{2} \rfloor}((K_{B_n})_{I_2}) \to \widetilde{H}^{\lfloor \frac{r+1}{2} \rfloor + \lfloor \frac{s+1}{2} \rfloor -1}((K_{B_n})_{I_1 \triangle I_2}),
$$
which implies that the multiplicative structure of
$$
\widetilde{H}^{\lfloor \frac{r-1}{2} \rfloor}((K_{B_n})_{I_1}) \text{ and }\widetilde{H}^{\lfloor \frac{s-1}{2} \rfloor}((K_{B_n})_{I_2})
$$
is trivial, \ie $\boldsymbol{\Psi}(\alpha) \smile \boldsymbol{\Psi}(\beta) =0$.
Since $\alpha \smile \beta$ is zero in this case, the proof is complete in the case where $I_1 \cap I_2 \neq \emptyset$.
	
Consider the case where $I_1 \cap I_2 = \emptyset$ and $\left\vert I_1 \right\vert \cdot \left\vert I_2 \right\vert$ is even.
By Lemma~\ref{lemma2},
	$$
	\iota_{\ast}(\Psi_{I_1 \cup I_2}(z)) =
	\begin{cases}
		(-1)^{\kappa_{(I_1,I_2)}(z)}\Psi_{I_1}(P_{I_1}(z))\star\Psi_{I_2}(P_{I_2}(z)), & \mbox{for }z \in \cR^B_{(I_1,I_2)}, \\
		0, & \mbox{for }z \notin \cR^B_{(I_1,I_2)}.
	\end{cases}
	$$
	Thus, if $z \in \cR^B_{(I_1,I_2)}$, then we have
	\begin{align*}
		(\boldsymbol{\Psi}(\alpha) \smile \boldsymbol{\Psi}(\beta))(\Psi_{I_1 \cup I_2}(z)) &= (\boldsymbol{\Psi}(\alpha) \otimes \boldsymbol{\Psi}(\beta))(\iota_{\ast}(\Psi_{I_1 \cup I_2}(z)))\\
		&= (-1)^{\kappa_{(I_1,I_2)}(z)}\boldsymbol{\Psi}(\alpha)(\Psi_{I_1}(P_{I_1}(z))) \boldsymbol{\Psi}(\beta)(\Psi_{I_2}(P_{I_2}(z)))\\
		&= (-1)^{\kappa_{(I_1,I_2)}(z)} \cC^\alpha_{P_{I_1}(z)}\cC^\beta_{P_{I_2}(z)}.
	\end{align*}
	If $z \notin \cR^B_{(I_1,I_2)}$, then $(\boldsymbol{\Psi}(\alpha) \smile \boldsymbol{\Psi}(\beta))(\Psi_{I_1 \cup I_2}(z)) = 0.$
	Furthermore,
	\begin{align*}
		\boldsymbol{\Psi}(\alpha \smile \beta)(\Psi_{I_1 \cup I_2}(z)) &= \boldsymbol{\Psi}(\sum_{z \in \cR^B_{(I_1,I_2)}}(-1)^{\kappa_{(I_1,I_2)}(z)}\cC^\alpha_{P_{I_1}(z)}\cC^\beta_{P_{I_2}(z)} \cdot z)(\Psi_{I_1 \cup I_2}(z)) \\
		&= \begin{cases}
			(-1)^{\kappa_{(I_1,I_2)}(z)}\cC^\alpha_{P_{I_1}(z)}\cC^\beta_{P_{I_2}(z)}, & \mbox{for }z \in \cR^B_{(I_1,I_2)}, \\
			0, & \mbox{for }z \notin \cR^B_{(I_1,I_2)},
		\end{cases}
	\end{align*}
which confirms this theorem.
\end{proof}

\end{document}